\documentclass[12pt,a4paper,reqno]{amsart}
\usepackage{amsmath, amsfonts, amsbsy, amsthm, amssymb}
\usepackage{graphicx}
\usepackage{xcolor}
\usepackage{enumerate,float,indentfirst}

\usepackage[T2A]{fontenc}
\usepackage[english]{babel}

\numberwithin{equation}{section}

\textheight220mm
\textwidth155mm
\voffset-0mm
\oddsidemargin4mm
\evensidemargin4mm

\theoremstyle{plain}
\newtheorem{thm}{Theorem}[section]

\newtheorem{lem}[thm]{Lemma}

\theoremstyle{remark}

\theoremstyle{definition}
\newtheorem*{defn}{Definition}


\DeclareMathOperator{\E}{\mathbb{E}}

\DeclareMathOperator{\im}{Im}

\title[Local Laws for  Sparse Sample Covariance  Matrices]
      {Local Laws for  Sparse Sample Covariance Matrices\\
without the truncation condition}
      
\author[F.~G\"otze]{F.~G\"otze}

\address{Friedrich G{\"o}tze\\
 Faculty of Mathematics\\
 Bielefeld University \\
 Bielefeld, Germany
}

\email{goetze@math.uni-bielefeld.de}

\author[A.~Tikhomirov]{A.~Tikhomirov}

\address{Alexander N. Tikhomirov\\
 Institute of Physics and Mathematics\\
 Komi Science Center of Ural Division of RAS \\
 Syktyvkar, Russia; and National Research University Higher School of Economics, Moscow, Russia
 }

\email{tikhomirov@ipm.komisc.ru}

\author[D.~Timushev]{D.~Timushev}

\address{Dmitry A. Timushev\\
 Institute of Physics and Mathematics\\
 Komi Science Center of Ural Division of RAS \\
 Syktyvkar, Russia
 }

\email{timushev@ipm.komisc.ru}

\keywords{Random matrices, sample covariance matrices, Marchenko--Pastur law}
     
\date{\today}

\begin{document}

\begin{abstract}
We consider sparse sample covariance matrices $\frac1{np_n}\mathbf X\mathbf X^*$, where $\mathbf X$ is a sparse matrix of order $n\times m$ with the sparse probability $p_n$. We prove the local Marchenko--Pastur law in some complex domain assuming that $np_n>\log^{\beta}n$, $\beta>0$ and some $(4+\delta)$-moment condition is fulfilled, $\delta>0$.
\end{abstract}

\maketitle

\section{Introduction}
Sample covariance matrices are of great practical importance for problems of multivariate statistical analysis and such rapidly developing areas as the theory of wireless communication and deep learning. Another significant area of application of sample covariance matrices is graph theory. The adjacency matrix of an undirected graph is asymmetric, so the study of its singular values leads to the sample covariance matrix. If we assume that the probability $p_n$ of having graph edges tends to zero as the number of vertices $n$ increases to infinity, we get to the concept of sparse random matrices.

Sparse Wigner random matrices have been considered in a number of papers (see \cite{Yau1, Yau2, Yau3, Yau4}) where many results have been obtained. With the symmetrization of sample covariance matrices it is possible to apply this results in the case when the observation matrix is square. However, when the sample size is greater than observation dimension, the spectral limit distribution has the singularity in zero, which requires different approaches. 

The limit spectral distribution of sparse sample covariance matrices with sparsity $np_n \sim n^\varepsilon$, ($\varepsilon>0$ is arbitrary small)  was studied in \cite{Lee1, Lee2}. In particular, a local law was proved under the assumption that the matrix elements satisfy the moments condition $ \E|X_{jk}|^q\le (Cq)^ {cq}$. In the paper \cite{mdpi} the case of the sparsity $np_n \sim \log^\alpha n$, for some $\alpha>1$ was considered, assuming that the moments of the matrix elements  satisfy the conditions $\E|X_{jk}|^{4+\delta}\le C<\infty$, $|X_{jk}|\le c_1(np_n)^{\frac12-\varkappa}$, for some $\varkappa>0$. Under this assumptions the local Marchenko--Pastur law was proved in some complex domain $z\in \mathcal D$ with $\im z > v_0 >0$, where $v_0$ is of order $\log^4 n/n$ and the domain bound not depend on $p_n$ while $np_n>\log^{\beta}n$.

This work is devoted to the case, when the elements $X_{jk}$ are not truncated, and only the conditions  $\E|X_{jk}|^{4+\delta}\le C<\infty$,  $np_n \sim \log^\alpha n$, for some $\alpha>1 $ are fulfilled.  We prove the  local Marchenko--Pastur law  in some complex domain $u+iv\in \mathcal D_\mu$ with the real part contained in the support of the Marchenko--Pastur distribution  and separated from the support ends.
\section{Main results}
Let $m=m(n)$, $m\ge n$.  Consider independent identically distributed zero mean random variables $X_{jk}$, $1\le  j\le n$, $1\le k\le m$ with $\E X^2_{jk}=1$ and independent of that set independent Bernoulli random variables $\xi_{jk}$, $1\le  j\le n$, $1\le k\le m$ with $\E\xi_{jk}=p_n$. In addition suppose that $np_n\to\infty$ as $n\to\infty$.

Observe the sequence of sparse sample covariance random matrices
\begin{equation*}
\mathbf X=\frac1{\sqrt{mp_n}}(\xi_{jk}X_{jk})_{1\le j\le n, 1\le k\le m}.
\end{equation*}
Denote by $s_1\ge \cdots\ge s_n$ the singular values of $\mathbf X$ and define the symmetrized empirical spectral distribution function (ESD) of the sample covariance matrix $\mathbf W=\mathbf X\mathbf X^*$:
$$
F_n(x)=\frac1{2n}\sum_{j=1}^n\Big(\mathbb I\{s_j\le x\}+\mathbb I\{-s_j\le x\}\Big),
$$
where $\mathbb I\{A\}$ stands for the event $A$ indicator.

Note that $F_n(x)$ is the ESD of the block matrix 
 $$
 \mathbf V=\begin{bmatrix}&\mathbf O_n&\mathbf X\\&\mathbf X^*&\mathbf O_m\end{bmatrix},
 $$
 where $\mathbf O_k$ is $k\times k$ matrix with zero elements. 

Denote $\mathbf R=\mathbf R(z)$ the resolvent matrix of $\mathbf V$:
$$
\mathbf R=(\mathbf V-z\mathbf I)^{-1}.
$$ 

Let $y=y(n)=\frac nm$ and $G_y(x)$ --- the symmetrized Marchenko--Pastur distribution function with the density
$$	
g_y(x)=\frac 1{2\pi y|x|}\sqrt{(x^2-a^2)(b^2-x^2)}\,\mathbb I\{a^2\le x^2\le b^2\},
$$
where $a=1-\sqrt y,\quad b=1+\sqrt y$. We shall assume that $y\le y_0<1$ for $n,m\ge1$.
Denote by $S_y(z)$ the Stieltjes transform of the distribution function $G_y(x)$ and $s_n(z)$ the Stieltjes transform of the distribution function $F_n(x)$. We have
\begin{align*}
S_y(z)=&\frac{-z+\frac{1-y}z+\sqrt{(z-\frac{1-y}z)^2-4y}}{2y},\\
s_n(z)=&\frac1{2n}\Big[\sum_{j=1}^n\frac1{s_j-z}+\sum_{j=1}^n\frac1{-s_j-z}\Big]=\frac1n\sum_{j=1}^n\frac z{s_j^2-z^2}=\frac1n\sum_{j=1}^nR_{jj}.
\end{align*}
The last equality follows from Schur complement (see \cite[Section 3]{mdpi}).
Put
\begin{equation}\label{b(z)1}
b(z)=z-\frac{1-y}z+2yS_y(z)=-\frac1{S_y(z)}+yS_y(z).
\end{equation}
In this paper we prove so called Marchenko--Pastur law for sparse sample covariance matrices.
Let 
$$
\Lambda_n:=\Lambda_n(z)=s_n(z)-S_y(z).
$$
For constant $\delta>0$ define the value $\varkappa=\varkappa(\delta):=\frac{\delta}{2(4+\delta)}$ and
consider the following conditions:
\begin{itemize}
\item the condition $(C0)$: for some $c_0>0$ and all $n\ge 1$ we have
$
np_n\ge c_0\log^{\frac2{\varkappa}} n;
$
\item the condition $(C1)$: for some $\delta>0$ we have 
$
\mu_{4+\delta}:=\E|X_{11}|^{4+\delta}<\infty;
$
\item the condition $(C2)$: there exists a constant $c_1>0$ such that for all $1\le j\le n$, $1\le k \le m$ we have
$|X_{jk}|\le c_1(np_n)^{\frac12-\varkappa}$ almost surely.
\end{itemize}

Introduce the quantity $v_0=v_0(a_0):=a_0n^{-1}\log^4 n$ with some positive constant $a_0$, and define the region
\begin{equation*}
\mathcal D(a_0):=\{z=u+iv: (1-\sqrt y-v)_+\le|u|\le1+\sqrt y + v, V\ge v\ge v_0\}.
\end{equation*}
Let
\begin{equation*}
\Gamma_n=2C_0\log n\Big(\frac1{nv}+\min\Big\{\frac1{np|b(z)|},\frac1{\sqrt{np}}\Big\}\Big),
\end{equation*}
\begin{equation*}
d(z)=\frac{\im b(z)}{|b(z)|},
\end{equation*}
and
\begin{equation*}
  d_n(z):=\frac1{nv}\left(d(z)+\frac{\log n}{nv|b(z)|}\right)+\frac1{np|b(z)|}.
\end{equation*}
Put
\begin{align*}
\mathcal T_n:=&\mathbb I\{|b(z)|\ge\Gamma_n\}\left( d_n(z)+{ d}_n^{\frac34}(z)\frac1{(nv)^{\frac14}}+{ d}_n^{\frac12}(z)\frac1{(nv)^{\frac12}}\right)\\&+\mathbb I\{|b(z)|\le\Gamma_n\}\left(\left(\frac{\Gamma_n}{nv}\right)^{\frac 12}+\Gamma_n^{\frac 12}\left(\frac{\Gamma_n^{\frac12}}{\sqrt{nv}}+\frac1{\sqrt{np}}\right)\right).
\end{align*}
In the paper \cite{mdpi}, assuming that the conditions $(C0)$--$(C2)$ are satisfied, the next theorem was proved:
\begin{thm}
Assume that
the conditions $(C0)$--$(C2)$ are satisfied. 
Then for any $Q\ge1$ there exist positive constants $C=C(Q, \delta, \mu_{4+\delta},c_0, c_1)$, $K=K(Q, \delta, \mu_{4+\delta},c_0, c_1)$, $a_0=a_0(Q, \delta, \mu_{4+\delta},c_0, c_1)$ such that for $z\in\mathcal D(a_0)$ 
\begin{equation*}
\Pr\Big\{\,|\Lambda_n|\ge K\mathcal T_n\Big\}\le Cn^{-Q}.
\end{equation*}
\end{thm}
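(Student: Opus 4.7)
The plan is to apply the Schur complement / self-consistent equation method to the Hermitization $\mathbf V$, derive a perturbed Marchenko--Pastur relation for $s_n(z)$, and invert it via stability analysis. For $1\le j\le n$, Schur's formula expresses the $j$-th diagonal entry as $R_{jj} = -1/(z+\eta_j)$, where $\eta_j$ is a quadratic form in the $j$-th row of $\mathbf X$ whose kernel is a block of the minor resolvent $\mathbf R^{(j)}$. Since $\E[\xi_{jk}X_{jk}\overline{\xi_{jl}X_{jl}}] = p_n\delta_{kl}$, the conditional mean $\E[\eta_j\mid \mathbf V^{(j)}]$ equals $m^{-1}\Tr \mathbf R^{(j)}_{MM}$, the normalized trace of the lower $m\times m$ block of the minor resolvent. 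Using the identity $\frac{1}{m}\Tr \mathbf R_{MM} = y s_n - (1-y)/z$ (plus an $O(1/(nv))$ minor correction coming from interlacing), averaging $R_{jj}$ over $j$, and subtracting the Marchenko--Pastur equation $1/S_y + z - (1-y)/z + y S_y = 0$, one arrives at the quadratic relation
$$
b(z)\Lambda_n + y\Lambda_n^2 = \mathcal E(z),
$$
where $\mathcal E(z)$ is a weighted sum of $\eta_j - \E[\eta_j\mid \mathbf V^{(j)}]$ plus lower-order resolvent terms.

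The central quantitative step is to bound $\mathcal E(z)$ with probability at least $1 - n^{-Q}$. Under $(C2)$, each entry $\xi_{jk}X_{jk}/\sqrt{mp_n}$ is a.s.\ bounded by $c_1(np_n)^{-\varkappa}/\sqrt m$, so applying the Hanson--Wright inequality to $\eta_j$ conditionally on $\mathbf V^{(j)}$ yields a sub-Gaussian contribution of size $\log n\sqrt{\im s_n/(nv)}$, arising from the Frobenius-norm variance $\frac{1}{mp_n}\Tr|\mathbf R^{(j)}|^2 \asymp \im s_n/(nv)$, and a sub-exponential contribution proportional to the Bernstein parameter $(np_n)^{-\varkappa}$ weighted by the spectral norm $\|\mathbf R^{(j)}\|$. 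Optimising the latter in the bulk regime $|b(z)|\gtrsim 1$ versus the edge regime $|b(z)|\ll 1$ reproduces the pattern $\min\{1/(np_n|b(z)|),1/\sqrt{np_n}\}$ appearing in $\Gamma_n$. A martingale-difference argument along the rows, using the $(4+\delta)$-moment input $(C1)$ to dominate higher moments, extracts the extra $1/(nv)$ gain in the average $\frac{1}{n}\sum_j\eta_j$.

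The quadratic equation from Step~1 is inverted according to the size of $|b(z)|$. When $|b(z)|\ge \Gamma_n$ the linear term dominates and $|\Lambda_n|\lesssim |\mathcal E(z)|/|b(z)|$, reproducing the first line of $\mathcal T_n$. When $|b(z)|\le \Gamma_n$ the quadratic branch $|\Lambda_n|\lesssim\sqrt{|\mathcal E(z)|/y}$ is selected by continuity of $\Lambda_n$ in $v$ together with $\im s_n\ge \im S_y > 0$, producing the $\Gamma_n^{1/2}$ terms. Because $\mathcal E$ itself depends on $\im s_n$, a bootstrap started from the trivial a priori bound $|\Lambda_n|\le 2/v$ is required to close the loop. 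A union bound over a polynomial-size grid in $\mathcal D(a_0)$, together with the $v^{-2}$ Lipschitz constant of $s_n - S_y$, then extends the estimate to all of $\mathcal D(a_0)$.

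The principal difficulty lies in Step~2. Without truncation, the entries $\xi_{jk}X_{jk}/\sqrt{mp_n}$ have variance $1/m$ but their $2k$-th moments are inflated by $p_n^{-(k-1)}$, so Bernstein's inequality alone cannot produce sub-exponential tails with a parameter as small as $(np_n)^{-\varkappa}$, which is what drives the $1/\sqrt{np_n}$ contribution of $\Gamma_n$. The truncation level $c_1(np_n)^{1/2-\varkappa}$ of $(C2)$ is calibrated precisely so that, by Markov's inequality applied with $(C1)$ and the identity $\varkappa = \delta/(2(4+\delta))$, the probability of any entry exceeding the threshold is $o(n^{-Q})$; this is both what makes $(C2)$ usable in the Hanson--Wright step and what makes it minimal. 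Removing $(C2)$ is the task of the present paper, handled by a separate tail-truncation analysis layered on top of the theorem above.
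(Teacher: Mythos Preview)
This theorem is not proved in the present paper; it is quoted from \cite{mdpi} as background for the main result (Theorem~\ref{main}), whose purpose is precisely to remove hypothesis $(C2)$. There is therefore no proof in this paper to compare your attempt against line by line.

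That said, the skeleton you describe --- Schur complement for $R_{jj}$, a perturbed self-consistent relation equivalent to $b(z)\Lambda_n + y\Lambda_n^2 = \mathcal E$, concentration for the fluctuation term, and a dichotomy in the stability inversion according to whether $|b(z)|\gtrless\Gamma_n$ --- matches what one can reconstruct of the \cite{mdpi} argument from Section~\ref{sec4} here, which reuses that framework verbatim: the decomposition $\varepsilon_j=\varepsilon_{j1}+\varepsilon_{j2}+\varepsilon_{j3}$, the averaged error $T_n=\frac1n\sum_j\varepsilon_jR_{jj}$, and the stability bound $|\Lambda_n|\le C|T_n|$ from \cite[Lemma~8]{mdpi}.

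The one substantive deviation in your sketch is the concentration tool. You invoke Hanson--Wright, using $(C2)$ to treat the entries as a.s.\ bounded and read off a sub-exponential parameter of order $(np_n)^{-\varkappa}$. The approach in \cite{mdpi}, and the one this paper inherits (Theorem~\ref{thmT}, Lemma~\ref{bilin}), is instead a high-moment computation: bound $\E|T_n|^q$ for $q\sim\log n$ via Rosenthal-type inequalities for linear and quadratic forms in independent variables, then apply Markov. In the sparse regime with only a $(4+\delta)$-th moment this is the more natural route, because the $q$-th moment of $\xi_{jk}X_{jk}$ is controlled explicitly through $(C1)$--$(C2)$ and the sparsity enters cleanly as powers of $p_n$ in the moment count; a direct Hanson--Wright bound, while not illegitimate under $(C2)$, does not by itself produce the specific mixed powers of $(nv)^{-1}$ and $(np_n)^{-1}$ that assemble into $d_n(z)$ and then into the two branches of $\mathcal T_n$. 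Your final paragraph on the calibration of the truncation level via $\varkappa=\delta/(2(4+\delta))$ is accurate and is indeed the mechanism the present paper exploits when it layers the admissible-configuration analysis of Section~\ref{sec2} on top of the cited theorem.
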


This work is devoted to the case, when the elements $X_{jk}$ are not truncated, and only the conditions $(C0)$--$(C1)$ are fulfilled. 
Let
\begin{equation*}
\mathcal D_{\mu}=\{z=u+iv:\, 1-\sqrt y+\mu\le |u|\le1+\sqrt y-\mu,\, V\ge v\ge v_0\},
\end{equation*}
for some $\mu>0$. Note that $|b(z)|$ are bounded in domain $\mathcal D_\mu$, therefore
\begin{equation}\label{Gamma}
\Gamma_n=C_0\log n\Big(\frac1{nv}+\frac1{np}\Big).
\end{equation}

Without assumption $(C1)$ we get the following result.
\begin{thm}\label{main}
Assume that the conditions $(C0)$--$(C1)$ are satisfied. Then for any $\mu>0$ and $Q\ge1$ there exist constants $K=K(Q,\delta,\mu_{4+\delta},\mu)$, $a_0=a_0(Q,\delta,\mu_{4+\delta},\mu)$ depending on 
$Q$, $\delta$, $\mu_{4+\delta}$ and $\mu$ such that
$$
\Pr\{|\Lambda_n|\le K\Gamma_n\}\ge1-n^{-Q},
$$
for all $z\in \mathcal D_{\mu}$ and $\Gamma_n$ defined in \eqref{Gamma}.
\end{thm}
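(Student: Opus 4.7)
The strategy is to reduce Theorem~\ref{main} to the previous theorem by a truncation--comparison argument. Fix $T_n:=c_1(np_n)^{\frac12-\varkappa}$ and define $\tilde X_{jk}:=\sigma_n^{-1}(X_{jk}\mathbb I\{|X_{jk}|\le T_n\}-\E X_{jk}\mathbb I\{|X_{jk}|\le T_n\})$ with $\sigma_n^2=\mathrm{Var}(X_{jk}\mathbb I\{|X_{jk}|\le T_n\})$. Condition $(C1)$ together with the choice $\varkappa=\delta/(2(4+\delta))$ give the standard bounds $|\E X_{jk}\mathbb I\{|X_{jk}|>T_n\}|\le \mu_{4+\delta}T_n^{-3-\delta}$ and $|1-\sigma_n^2|\le \mu_{4+\delta}T_n^{-2-\delta}$; both are controlled by $\Gamma_n/\log^5 n$ on $\mathcal D_\mu$ since $T_n^{-2-\delta}\le (np_n)^{-1-\delta/(4+\delta)}$. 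Let $\tilde{\mathbf V}$ be the block matrix built from $\xi_{jk}\tilde X_{jk}/\sqrt{mp_n}$. The rescaled variables $\tilde X_{jk}$ satisfy $(C0)$--$(C2)$ with constants depending only on $\delta,\mu_{4+\delta},c_0,c_1$, so the previous theorem gives $|\tilde s_n(z)-S_y(z)|\le K'\mathcal T_n$ off an event of probability $\le n^{-Q-1}$. On $\mathcal D_\mu$ the quantity $|b(z)|$ is bounded below by a constant $c(\mu,y_0)>0$, making the indicator $\mathbb I\{|b(z)|\ge\Gamma_n\}$ active, and a direct inspection of $\mathcal T_n$ shows $\mathcal T_n\lesssim \Gamma_n$ uniformly on $\mathcal D_\mu$.

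\textbf{Comparison of $s_n$ and $\tilde s_n$.} It remains to bound $|s_n(z)-\tilde s_n(z)|$ by $O(\Gamma_n)$ with the required probability. I would decompose $\mathbf V-\tilde{\mathbf V}$ into three parts. First, a rank-$2$ deterministic shift from centring: subtracting the constant $\E X_{jk}\mathbb I\{|X_{jk}|\le T_n\}$ from every entry of $\mathbf X$ is a rank-$1$ update, so Cauchy interlacing yields a contribution $O(1/(nv))\le \Gamma_n$. Second, a multiplicative rescaling by $1-\sigma_n^{-1}$: using the resolvent identity $\Tr(\mathbf R(1-\sigma_n^{-1})\tilde{\mathbf V}\tilde{\mathbf R})=(1-\sigma_n^{-1})(\Tr(\mathbf R)+z\,\Tr(\mathbf R\tilde{\mathbf R}))$ together with the a priori bound $(2n)^{-1}|\Tr(\mathbf R\tilde{\mathbf R})|\lesssim |\tilde s_n'(z)|\lesssim 1$ in the bulk gives $O(T_n^{-2-\delta})\lesssim \Gamma_n$. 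Third, the genuine tail piece $\mathbf E$ supported on $\{(j,k):\xi_{jk}=1,|X_{jk}|>T_n\}$: introducing the auxiliary threshold $T_n^\ast:=(n^2 p_n)^{1/(4+\delta)}\log n$, $(C1)$ and a union bound put no entry above $T_n^\ast$ on an event of probability $\ge 1-n^{-Q-1}$, while a Bernstein-type argument applied to $\sum\xi_{jk}\mathbb I\{|X_{jk}|>T_n\}$ bounds the number $N$ of non-zero tail positions by $C\log^A n$ on an event of the same probability.

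\textbf{Main obstacle.} The technical heart is the third part: the perturbation $\mathbf E$ has only polylog-many non-zero entries but magnitudes up to $T_n^\ast/\sqrt{mp_n}$, so the operator-norm and Hoffman--Wielandt estimates $\|\mathbf E\|/v^2$ and $\|\mathbf E\|_F/(v^2\sqrt n)$ are too crude at $v\asymp v_0=a_0n^{-1}\log^4 n$, blowing up polynomially in $\log n$. The intended route is the resolvent identity
\[
-\frac1{2n}\Tr\bigl(\mathbf R(z)\,\mathbf E\,\tilde{\mathbf R}(z)\bigr)=-\frac1{2n}\sum_{(i,j)\in\mathrm{supp}\,\mathbf E}(\tilde{\mathbf R}\mathbf R)_{ji}E_{ij},
\]
in which each off-diagonal resolvent product is controlled via the Ward identity $\sum_k|\tilde R_{ik}|^2=v^{-1}\im\tilde R_{ii}$ combined with the entrywise local law $|\tilde R_{ii}|\lesssim 1$ on $\mathcal D_\mu$ (derivable from the previous theorem), yielding the averaged bound $|\tilde R_{ij}|^2\lesssim (nv)^{-1}$. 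This produces a gain of a factor $(nv)^{-1}$ over operator-norm estimates, and summing over the $N\le \log^A n$ support points followed by a Bernstein-type concentration argument on the independent Bernoulli-modulated positions should deliver $|s_n-\tilde s_n|\lesssim \Gamma_n$. The restriction to the bulk $\mathcal D_\mu$ is essential here: only there is $\im S_y(z)$ bounded below by a positive constant, which is what makes the Ward-identity gain effective; outside the bulk this step fails, which is the structural reason Theorem~\ref{main} does not extend to the spectral edges.
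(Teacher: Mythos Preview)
Your third step contains a decisive counting error. The expected number of tail positions is
\[
\E\sum_{j,k}\xi_{jk}\mathbb I\{|X_{jk}|>T_n\}\ \le\ nm\,p_n\,\mu_{4+\delta}T_n^{-(4+\delta)}\ =\ \frac{m\,\mu_{4+\delta}}{c_1^{4+\delta}\,np_n},
\]
since $T_n^{4+\delta}=c_1^{4+\delta}(np_n)^2$. In the regime $np_n\asymp\log^{\alpha}n$ allowed by $(C0)$ this mean is of order $n/\log^{\alpha}n$, not polylogarithmic, and a Bernstein bound concentrates $N$ around its mean, not below it. With $N\asymp n/\log^{\alpha}n$ support points, each carrying an entry of $\mathbf E$ of size up to $T_n^{\ast}/\sqrt{mp_n}\gtrsim n^{1/(4+\delta)}/\sqrt{np_n}$, the resolvent comparison $-\frac{1}{2n}\Tr(\mathbf R\,\mathbf E\,\tilde{\mathbf R})$ cannot be made $O(\Gamma_n)$ by the Ward-identity gain you describe. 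There is also a circularity: to control $(\tilde{\mathbf R}\mathbf R)_{ji}$ you would need an a~priori bound on the entries of the \emph{untruncated} resolvent $\mathbf R$, which is precisely the missing ingredient.

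The paper does not perform a truncation--comparison at all. Instead it proves directly that $\max_{j,k}|R_{jk}|$ is bounded on $\mathcal D_\mu$ with overwhelming probability (Lemma~\ref{lem_main}), and then feeds this into the moment bound for the self-consistent error $T_n$ (Theorem~\ref{thmT}), from which Theorem~\ref{main} follows via $|\Lambda_n|\le C|T_n|$. The resolvent bound uses Aggarwal's configuration framework: one conditions on the pattern $\mathbf L$ of large entries, shows that with high probability (i) there are at most $\sqrt{n/p}$ deviant indices and (ii) every connected component of deviant indices has size $\le\log n$. On this event one reorders so that the deviant rows/columns form a block $\mathbf V_{11}$; since $\mathbf V_{11}$ is Hermitian and the Schur complement correction $\mathbf Y=\mathbf V_{12}(\mathbf V_{22}-z\mathbf I)^{-1}\mathbf V_{12}^{\ast}$ is shown to be close to the scalar $yS_y(z)-\tfrac{1-y}{z}$, the imaginary part of $z+\text{scalar}$ provides a uniform lower bound on the smallest singular value of $\mathbf V_{11}-z\mathbf I-\mathbf Y$, giving $\|\mathbf R_{11}\|\le C/|b(z)|$ without any smallness assumption on the deviant entries. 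This structural use of the small connected components is the mechanism that replaces your failed support-size bound.
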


\subsection*{Organization}
The proof of the theorem is based on papers \cite{Aggar} and \cite{mdpi}.  In \textbf{Section \ref{sec4}} we follow \cite{mdpi}. In our case the domain $\mathcal D_\mu$ is separated from the ends of the spectrum. This makes it possible to significantly simplify the estimates obtained there and so to prove Theorem \ref{main}. In \textbf{Section \ref{sec2}}, we show that the elements $R_{jk}$ of the resolvent are bounded. For this, following \cite{Aggar}, we introduce the so-called admissible and inadmissible configurations. Assuming that the configuration is admissible, we obtain conditional estimates for $R_{jk}$. Further, taking into account the small probability of inadmissible configurations, we obtain the estimate for the resolvent elements. In the \textbf{Section \ref{aux}} we state and prove some auxiliary results.

\subsection*{Notation}
We use $C$ for large universal constants which maybe different from line by line. $S_y(z)$ and $s_n(z)$ denote the Stieltjes transforms of the symmetrized Marchenko--Pastur distribution and the spectral distribution function correspondingly. $R(z)$ denotes the resolvent matrix. Let $\mathbb T= \{1,\ldots,n\}$, $\mathbb J\subset\mathbb  T$ and
$\mathbb T^{(1)}=\{1,\ldots,m\}$, $\mathbb K\subset \mathbb T^{(1)}$. Consider $\sigma$-algebras $\mathfrak M^{(\mathbb J,\mathbb K)}$, generated by the elements of $\mathbf X$ with the exception of the rows with number from  $\mathbb J$ and the columns with number from $\mathbb K$. We will write for brevity $\mathfrak M_j^{(\mathbb J,\mathbb K)}$ 
instead of $\mathfrak M^{(\mathbb J\cup\{j\},\mathbb K)}$ and $\mathfrak M_{l+n}^{(\mathbb J,\mathbb K)}$ instead of $\mathfrak M^{(\mathbb J, \mathbb K\cup\{l\})}$. By symbol $\mathbf X^{(\mathbb J,\mathbb K)}$ we denote the matrix $\mathbf X$ which rows with numbers in $\mathbb J$ are deleted, and which columns with numbers in $\mathbb K$ are deleted too. In a similar way, we will denote all objects defined via $\mathbf X^{(\mathbb J,\mathbb K)}$, such that the resolvent matrix $\mathbf R^{(\mathbb J,\mathbb K)}$, the ESD Stieltjes transform $s_n^{(\mathbb J,\mathbb K)}$, $\Lambda_n^{(\mathbb J,\mathbb K)}$ and so on. 
 The symbol $\E_j$ denotes the conditional expectation with respect to the $\sigma$-algebra $\mathfrak M_j$, and $\E_{l+n}$ --- with respect to $\sigma$-algebra $\mathfrak M_{l+n}$.
Let ${\mathbb J}^c=\mathbb T\setminus\mathbb J$, ${\mathbb K}^c=\mathbb T^{(1)}\setminus\mathbb K$.

\section{Proof of Theorem \ref{main}}\label{sec4}
For the diagonal elements of $\mathbf R$ we can write
 \begin{equation}\label{000}
 R^{(\mathbb J,\mathbb K)}_{jj}=S_y(z)\big(1-\varepsilon^{(\mathbb J,\mathbb K)}_jR^{(\mathbb J,\mathbb K)}_{jj}+y\Lambda_n^{(\mathbb J,\mathbb K)}R^{(\mathbb J,\mathbb K)}_{jj}\big),
 \end{equation}
 for $j\in{\mathbb J}^c,$ and
 \begin{equation}\label{002}
 R_{l+n,l+n}^{(\mathbb J,\mathbb K)}=-\frac1{z+yS_y(z)}\big(1-\varepsilon^{(\mathbb J,\mathbb K)}_{l+n}R_{l+n,l+n}^{(\mathbb J,\mathbb K)}+y\Lambda_n^{(\mathbb J,\mathbb K)}R_{l+n,l+n}^{(\mathbb J,\mathbb K)}\big),
 \end{equation}
 for $l\in{\mathbb K}^c$. Correction terms $\varepsilon_{j}^{(\mathbb J,\mathbb K)}$ for $j\in{\mathbb J}^c$ and $\varepsilon_{l+n}^{(\mathbb J,\mathbb K)}$ for $l\in{\mathbb K}^c$ are defined as
 \begin{align*}
\varepsilon_j^{(\mathbb J,\mathbb K)}&=\varepsilon^{(\mathbb J,\mathbb K)}_{j1}+\cdots+\varepsilon^{(\mathbb J,\mathbb K)}_{j3},\notag\\
 \varepsilon^{(\mathbb J,\mathbb K)}_{j1}&=\frac1{m}\sum_{l=1}^mR_{l+n,l+n}^{(\mathbb J,\mathbb K)}-\frac1m\sum_{l=1}^mR_{l+n,l+n}^{(\mathbb J\cup \{j\},\mathbb K)},\notag\\
\varepsilon_{j2}^{(\mathbb J,\mathbb K)}&=\frac1{mp}\sum_{l=1}^m(X_{jl}^2\xi_{jl}-p)R^{(\mathbb J\cup \{j\},\mathbb K)}_{l+n,l+n},\notag\\
\varepsilon_{j3}^{(\mathbb J,\mathbb K)}&=\frac1{mp}\sum_{1\le l\ne k\le m}X_{jl}X_{jk}\xi_{jl}\xi_{jk}R^{(\mathbb J\cup \{j\},\mathbb K)}_{l+n,k+n};
\end{align*}
and
\begin{align*}
\varepsilon_{l+n}^{(\mathbb J,\mathbb K)}&=\varepsilon_{l+n,1}^{(\mathbb J,\mathbb K)}+\cdots+\varepsilon_{l+n,3}^{(\mathbb J,\mathbb K)},\notag\\
\varepsilon_{l+n,1}^{(\mathbb J,\mathbb K)}&=\frac1m\sum_{j=1}^nR^{(\mathbb J,\mathbb K)}_{jj}-\frac1m\sum_{j=1}^nR^{(\mathbb J,\mathbb K\cup\{l+n\})}_{jj},\notag\\
\varepsilon_{l+n,2}^{(\mathbb J,\mathbb K)}&=\frac1{mp}\sum_{j=1}^n(X_{jl}^2\xi_{jl}-p)R^{(\mathbb J,\mathbb K\cup\{l+n\})}_{jj},\notag\\
\varepsilon_{l+n,3}^{(\mathbb J,\mathbb K)}&=\frac1{mp}\sum_{1\le j\ne k\le n}X_{jl}X_{kl}\xi_{jl}\xi_{kl}R^{(\mathbb J,\mathbb K\cup\{l+n\})}_{jk}. 
\end{align*}

Summing the equation \eqref{000} ($\mathbb J=\emptyset$, $\mathbb K=\emptyset$), we get the self-consistent equation
 \begin{equation*}
 s_n(z)=S_y(z)(1+T_n-y\Lambda_ns_n(z)),
 \end{equation*}
with the error term 
\begin{equation*}
T_n=\frac1n\sum_{j=1}^n\varepsilon_jR_{jj}.
\end{equation*}

The proof of Theorem \ref{main} is based on the following theorem.
\begin{thm}\label{thmT}
Under the conditions of the Theorem \ref{main}, for any $\mu>0$, there exist constants $C=C(\delta, \mu_{4+\delta},c_0)$, $a_0=a_0(\delta, \mu_{4+\delta},c_0)$, such that
\begin{equation*}
\E|T_n|^q\mathbb I\{\mathcal Q\}\le C^q\Big(\frac1{nv}+\frac 1{np}\Big)^q\log^qn,
\end{equation*}
for all $z\in \mathcal D_{\mu}$.
\end{thm}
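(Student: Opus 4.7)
The plan is to follow the strategy of \cite{mdpi}, exploiting two simplifications available here. First, since $z\in\mathcal D_\mu$ is separated from the spectral edges, $|b(z)|$ and $|S_y(z)|$ are bounded away from $0$ and from $\infty$ by constants depending only on $\mu$; this collapses the edge/bulk dichotomy implicit in the quantity $\mathcal T_n$ and reduces the target rate to the single expression $\Gamma_n$ of \eqref{Gamma}. Second, on the event $\mathcal Q$ constructed in Section \ref{sec2}, all resolvent entries $R_{jk}^{(\mathbb J,\mathbb K)}$ are uniformly bounded, so $|R_{jj}|\le C$ acts as a harmless multiplier in each summand of $T_n$.

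I would decompose $T_n=T_{n1}+T_{n2}+T_{n3}$ with $T_{ni}=\frac{1}{n}\sum_j\varepsilon_{ji}R_{jj}$ and estimate each piece separately. The piece $T_{n1}$ is a sum of rank-one resolvent differences, bounded deterministically by $C/(nv)$ on $\mathcal Q$ via the Schur complement / interlacing identity. The piece $T_{n2}$ is, conditionally on $\mathfrak M_j$, an average of independent centered random variables $\frac{1}{mp}(X_{jl}^2\xi_{jl}-p)R^{(j)}_{l+n,l+n}$, to which I would apply a Rosenthal or Burkholder martingale $q$-th moment inequality. The piece $T_{n3}$ is, conditionally on $\mathfrak M_j$, a centered off-diagonal quadratic form in the independent variables $(X_{jl}\xi_{jl})_{l=1}^m$, to which I would apply a Hanson--Wright-type $q$-th moment bound; the key Hilbert--Schmidt term is estimated on $\mathcal Q$ via the Ward identity $\sum_{k,l}|R^{(j)}_{kl}|^2=v^{-1}\im\Tr R^{(j)}\le Cn/v$.

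The main obstacle is the absence of condition $(C2)$: without a deterministic bound on $|X_{jl}|$, both the summand sup-norm in Rosenthal's inequality for $T_{n2}$ and the diagonal entries of the quadratic form in $T_{n3}$ are unbounded. I would resolve this by truncating at a polynomial threshold $C_T=n^{\beta}$, with $\beta=\beta(Q,\delta)$ chosen so that $(C1)$ and the union bound make the no-outlier event $\Omega_T:=\{\max_{j,l}|X_{jl}|\le C_T\}$ occur with probability at least $1-n^{-Q'}$ for any preassigned $Q'$. Writing $X_{jl}=\widetilde X_{jl}+(X_{jl}-\widetilde X_{jl})$ with $\widetilde X_{jl}$ a recentered truncation, the bounded part is handled by the conditional Rosenthal and Hanson--Wright bounds, yielding the target rate $C^q\bigl(\frac{1}{nv}+\frac{1}{np}\bigr)^q\log^q n$ by the same combinatorial scheme as in \cite{mdpi}; the complement $\Omega_T^c$ contributes negligibly by combining the a-priori bound $|T_n|\mathbb I\{\mathcal Q\}\le C/v$ with H\"older's inequality and $\Pr\{\Omega_T^c\}\le n^{-Q'}$, provided $Q'$ is chosen large compared with $q$. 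The delicate step is to calibrate $C_T$ against the $q$-dependence of the Rosenthal and Hanson--Wright constants so that the $\log^q n$ factor in the final bound absorbs all truncation and union-bound losses.
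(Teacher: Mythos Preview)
Your high-level plan---follow \cite{mdpi}, exploit that $|b(z)|$ is bounded above and below on $\mathcal D_\mu$, and feed in the resolvent bound of Lemma~\ref{lem_main} as the substitute for the event $\mathcal B$ required in the \cite{mdpi} argument---is exactly what the paper does. The paper's proof of Theorem~\ref{thmT} is literally: repeat \cite[Theorem~3]{mdpi}, using the bulk bounds on $b(z)$ and the estimate $\Pr\{\mathcal B;\mathcal Q\}\le Cn^{-Q}$ supplied by Lemma~\ref{lem_main}.

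Where your proposal diverges, and where it has a genuine gap, is in your separate treatment of the missing condition~$(C2)$ via a global truncation at level $C_T=n^{\beta}$. This cannot be calibrated in the sparse regime $np\asymp\log^{\alpha}n$ that the theorem covers. Under only $(C1)$ the union bound gives $\Pr\{\Omega_T^c\}\le Cn^{2-\beta(4+\delta)}$, so to make $\Omega_T$ occur with probability $1-n^{-Q'}$ you need $\beta\ge(2+Q')/(4+\delta)$, a fixed positive constant. But the Rosenthal tail term in your bound for $\varepsilon_{j2}$ contains $\E\bigl(|\widetilde X_{jl}|^{2q}\xi_{jl}\bigr)\le p\,n^{\beta(2q-4-\delta)}\mu_{4+\delta}$, giving a contribution of order $n^{2\beta q}/(np)^{q-1}$; with $q\asymp\log n$ (which the application to Theorem~\ref{main} requires) and $np$ polylogarithmic, this blows up and cannot be brought below the target $(1/(np))^{q}\log^{q}n$. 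Equivalently, for your truncated model to satisfy a $(C2)$-type bound you would need $n^{\beta}\le c_1(np)^{\frac12-\varkappa}$, forcing $\beta\to0$, in direct conflict with the union-bound constraint. No choice of $\beta$ closes this gap, so the ``delicate calibration'' you allude to is impossible.

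The paper avoids this obstruction by truncating at the natural level $C(np)^{\frac12-\varkappa}$ and \emph{accepting} that outliers occur: the expected number of entries with $L_{jk}=1$ is of order $1/p$, not $o(1)$. Aggarwal's admissible-configuration machinery (Lemma~\ref{inadm} through Lemma~\ref{lem_2.3}) shows that with overwhelming probability these outliers number at most $\sqrt{n/p}$ and cluster into connected components of size at most $\log n$. Conditionally on an admissible configuration, every \emph{typical} row consists entirely of type-$a$ entries bounded by $C(np)^{\frac12-\varkappa}$, so the \cite{mdpi} moment computations for $\varepsilon_{j2},\varepsilon_{j3}$ go through verbatim on those rows, while the deviant rows are absorbed via the block-resolvent analysis that underlies Lemma~\ref{lem_main}. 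This conditional restoration of $(C2)$ is the mechanism the paper's one-line proof leans on; your polynomial-truncation substitute does not reproduce it.
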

\begin{proof}
The proof repeats \cite{mdpi}[Theorem 3], taking into account that $0<\varepsilon<\im b(z)$ for some $\varepsilon>0$ and $\im b(z)$, $|b(z)|$ are bounded in domain $\mathcal D_\mu$. The arguments of \cite{mdpi}[Theorem 3] also require that the condition $\Pr\{\mathcal B\}\le Cn^{-Q}$ be satisfied (see \cite{mdpi}[p. 17]). But Lemma \ref{lem_main} implies $\Pr\{\mathcal B; \mathcal Q\}\le Cn^{-Q}$.
\end{proof}
\begin{proof}[Proof of Theorem \ref{main}]
First of all, we note that \cite[Lemma 8]{mdpi} gives the bound
	\begin{equation*}
	|\Lambda_n|\le C{|T_n|}
	\end{equation*}
	in domain $\mathcal D_\mu$. We have
	$$
\Pr\{|\Lambda_n|\ge K\Gamma_n\}\le\Pr\{|\Lambda_n|\ge K\Gamma_n; \mathcal Q\} +\Pr\{\mathcal Q^c\}.
$$
\cite[Corollary 3]{mdpi} implies 
\begin{equation*}
\Pr\{\mathcal Q\}\ge1-Cn^{-Q}.
\end{equation*}
Applying Markov inequality and combining the last inequality and Theorem \ref{thmT}, we get
\begin{equation*}
\Pr\{|\Lambda_n|\ge K\Gamma_n\}\le\frac{\E|T_n|^q\mathbb I\{\mathcal Q\}}{{K^q}\Gamma_n^q}+Cn^{-Q}\le\Big(\frac{C}{K}\Big)^q.
\end{equation*}

By choosing a sufficiently large K value and  $q\sim\log n$, we obtained the proof.
\end{proof}

\section{Estimate of $R_{jk}$}\label{sec2}

 We shall use the notations of \cite{mdpi}.

Let $s_0>1$ be some positive constant depending on $\delta$, $V$. 
For any $0<v\le V$ we define $k_v$ as
$$
k_v=k_v(V):=\min\{l\ge 0: s_0^lv\ge V\}.
$$
For given $\gamma>0$ consider the event
$$
\mathcal Q_{\gamma}(v):=\big\{|\Lambda_n(u+iv)|\le \gamma, \text{ for all } u\big\}
$$
and the event
\begin{equation*}
\mathcal Q:=\widehat{\mathcal Q}_{\gamma}(v)=\bigcap_{l=0}^{k_v}\mathcal Q_{\gamma}(s_0^lv).
\end{equation*}

For the proof of main result it is enough to estimate the entries of the resolvent matrix. We prove the next Lemma.
\begin{lem}\label{lem_main}
Under conditions of Theorem \ref{main} there exists a constant $H$ such that for $z\in\mathcal D_{\mu}$
\begin{equation*}
\Pr\{\max_{1\le j,k\le n+ m}|R_{jk}|>H;\mathcal Q\}\le Cn^{-c\log n\log n}.
\end{equation*}
\end{lem}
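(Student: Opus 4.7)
The strategy is to control the entries of $\mathbf R$ on $\mathcal Q$ through the Schur identities \eqref{000}, \eqref{002} together with their off-diagonal analogues. On $\mathcal D_\mu$ the quantity $S_y(z)$ is bounded and the denominator $1+S_y(z)(\varepsilon_j-y\Lambda_n)$ in the inversion of \eqref{000} stays bounded away from zero as soon as the corrections $\varepsilon_j^{(\mathbb J,\mathbb K)}$ and $\varepsilon_{l+n}^{(\mathbb J,\mathbb K)}$ are small. Hence it suffices to prove, with probability at least $1-n^{-c(\log n)^{2}}$ uniformly in $(\mathbb J,\mathbb K)$, that all of these corrections are $o(1)$; the off-diagonal entries $R_{jk}$ are then recovered from the standard identity expressing them through diagonal entries of submatrices and the deleted row/column of $\mathbf X$.

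Without (C2) the entries $X_{jl}$ can be atypically large, so the quadratic-form concentration behind the estimates on $\varepsilon_{j2},\varepsilon_{j3}$ in \cite{mdpi} no longer applies directly. Following \cite{Aggar}, I would introduce an admissibility event. Set $T:=(np)^{1/2-\varkappa}$; since $\varkappa=\delta/(2(4+\delta))$, condition (C1) gives
$$
\Pr\{|X_{jl}|\xi_{jl}>T\}\le \frac{p\,\mu_{4+\delta}}{T^{4+\delta}}=\frac{\mu_{4+\delta}}{n^{2}p}.
$$
Call the configuration admissible if no row and no column of $\mathbf X$ contains more than a slowly growing number (of order $\log n$) of entries exceeding $T$ in modulus. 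A Markov bound with moment order $\sim\log n$ on the count of such entries in each row/column, together with a union bound over the $n+m$ rows and columns, should produce $\Pr(\mathcal A^{c})\le n^{-c(\log n)^{2}}$, the two $\log n$ factors arising respectively from the moment order and from the admissibility threshold.

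On $\mathcal A\cap\mathcal Q$ I would split $X_{jl}=X_{jl}^{\le}+X_{jl}^{>}$ at the level $T$. The bounded part satisfies (C2), so the Hanson--Wright / Burkholder arguments of \cite{mdpi} apply and give the required bound on its contribution to $\varepsilon_{j2},\varepsilon_{j3}$. The unbounded part contributes at most polylogarithmically many nonzero terms per row, each of modulus at most $T^{2}/(mp)$ times a single entry $R_{l+n,k+n}^{(\mathbb J\cup\{j\},\mathbb K)}$; on $\mathcal Q$, the Ward identity
$$
\sum_{k}|R_{l+n,k+n}|^{2}=\frac{\im R_{l+n,l+n}}{v}
$$
combined with Cauchy--Schwarz then makes this contribution $o(1)$, provided a diagonal bound on $\im R_{l+n,l+n}$ is available. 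A symmetric analysis handles the family $\varepsilon_{l+n}^{(\mathbb J,\mathbb K)}$.

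The main obstacle is twofold. First, reaching the super-polynomial probability $n^{-c(\log n)^{2}}$ requires a delicate balance between the admissibility threshold and the moment order in the Markov step: a constant threshold or a bounded moment order would give only polynomial decay, which is too weak to survive the union bound over the index sets $(\mathbb J,\mathbb K)$ arising from the repeated row/column removals in Section \ref{sec4}. Second, the bound on $\varepsilon_{j3}$ via Ward presupposes control of $\im R_{l+n,l+n}$, which in turn comes from \eqref{002} and depends on $\varepsilon_{l+n}$; this circularity has to be unwound by a bootstrap, starting from the trivial a priori estimate $|R_{jk}|\le v^{-1}$ and iteratively feeding the improved bounds back into the self-consistent relations until the target constant $H$ is reached.
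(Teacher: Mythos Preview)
Your splitting idea has a genuine gap. You write that on $\mathcal A$ the unbounded part contributes ``at most polylogarithmically many nonzero terms per row, each of modulus at most $T^{2}/(mp)$ times a single entry''. But $X_{jl}^{>}=X_{jl}\mathbb I\{|X_{jl}|>T\}$ satisfies $|X_{jl}^{>}|\ge T$ when nonzero, not $\le T$; under $(C1)$ alone these values are unbounded. So neither $\varepsilon_{j2}$ nor $\varepsilon_{j3}$ can be controlled termwise on the large part, and no Ward-type identity rescues this: a single pair $(l,t)$ with $|X_{jl}|,|X_{jt}|$ of order $n^{100}$ already destroys the bound. This is precisely the difficulty that makes the lemma nontrivial without $(C2)$, and your proposal does not address it.

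The paper's route is structurally different and is designed to avoid ever bounding the large entries. Following \cite{Aggar}, one tags the deviant indices (those incident to some large entry), shows that with high probability they are few (at most $\sqrt{n/p}$) and form connected components of size $\le\log n$, and then permutes so that all large entries sit in a Hermitian block $\mathbf V_{11}$. Writing $\mathbf R_{11}=(\mathbf V_{11}-z\mathbf I-\mathbf Y)^{-1}$ with $\mathbf Y=\mathbf V_{12}(\mathbf V_{22}-z\mathbf I)^{-1}\mathbf V_{12}^{*}$, one proves that $\mathbf Y$ concentrates around $\big(yS_y(z)-\tfrac{1-y}{z}\big)\mathbf I$; since $\mathbf V_{11}$ is Hermitian and $\im\big(z-\tfrac{1-y}{z}+yS_y(z)\big)\ge c|b(z)|>0$ on $\mathcal D_\mu$, the resolvent bound $\|\mathbf R_{11}\|\le C/|b(z)|$ follows \emph{without any information on the size of the entries of $\mathbf V_{11}$}. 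This Hermitian trick is the missing idea.

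Two smaller points. First, you read the exponent as $c(\log n)^{2}$, but the paper's actual bound (and all the supporting lemmas) is $n^{-c\log\log n}$; your combinatorial estimate for $\Pr(\mathcal A^{c})$ would not give $n^{-c(\log n)^{2}}$ either. Second, your admissibility event (per-row count of large entries) is not the one used: the paper needs both a global bound on the number of deviant indices and a bound on the size of connected components, the latter being what makes the block $\mathbf V_{11}$ decompose into small Hermitian blocks.
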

	Following the work of Aggarwal (see \cite{Aggar}), we introduce the configuration matrix $\mathbf L=(L_{jk})$.
	Set events
	$$
	A_{jk}=\{|X_{jk}|\ge C(np)^{\frac12-\varkappa}\}.
	$$
	Define the matrix $\mathbf L$ with elements
	$$
	L_{jk}=\xi_{jk}\mathbb I\{A_{jk}\}.
	$$
Note that
\begin{equation*}
\E L_{jk}\le \frac{\mu_{4+\delta}}{n^2p}.
\end{equation*} 
Introduce the configuration matrix $\mathbf L_{\mathbf V}$:
$$
\mathbf L_{\mathbf V}=\begin{bmatrix}\mathbf O&\mathbf L\\\mathbf L^T&\mathbf O\end{bmatrix}.
$$
\begin{defn}
We call $j$ and $k$ \emph{linked} (with respect to $\mathbf L_{\mathbf V}$), if $L_{jk}=1$. Otherwise we call them \emph{unlinked}.
\end{defn}
\begin{defn}
If there exists a sequence $j=j_1,j_2,\ldots,j_r=k$ such that $j_\nu$ is linked to $j_{\nu+1}$ for each $\nu\in[1,r-1]$, then $j$ and $k$ are called \emph{connected}.
\end{defn}
\begin{defn}
We call an index $j$ \emph{deviant} if there exists some index $k$ such that $j$ and $k$ are linked.
Otherwise we call $j$ \emph{typical}. 
\end{defn}
Let
$$
\mathcal D_{\mathbf L}=\{j\in[1,n+m]: j\text{ is deviant} \},\quad \mathcal T_{\mathbf L}=\{j\in[1,n+m]: j\text{ is typical}\}.
$$
\begin{defn}
We call $\mathbf L_{\mathbf V}$ \emph{deviant-inadmissible} if there exist at least $\sqrt{\frac{n}{p}}$, deviant indices. We call $\mathbf L_{\mathbf V}$ \emph{connected-inadmissible} if there exist distinct indices $j_1,j_2,\ldots, j_r$, $r=[\log n]$, that are pairwise connected. We call the configuration $\mathbf L_{\mathbf V}$ \emph{inadmissible}, if it is either deviant-inadmissible or connected-inadmissible. Otherwise, the configuration is called admissible.
\end{defn}
Define $\mathcal A$ as the set of all admissible configurations of size $n+m$.
Let $\mathcal C=\mathcal C_1\cup\mathcal C_2$ be the event that the configuration $\mathbf L_{\mathbf V}$ is inadmissible, $\mathcal C_1$ be the event that the configuration $\mathbf L_{\mathbf V}$ is deviant-inadmissible, and $\mathcal C_2$ be the event that the configuration $\mathbf L_{\mathbf V}$ is connected-inadmissible.
\begin{lem}\label{inadm}
Under the conditions of Theorem \ref{main} the bound
\begin{equation*}
\Pr\{\mathcal C\}\le Cn^{-c\log\log n}
\end{equation*}
is valid.
\end{lem}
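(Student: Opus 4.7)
The plan is to treat the union $\mathcal C=\mathcal C_1\cup\mathcal C_2$ by bounding each of the two inadmissibility events separately. The common starting point is the estimate $\E L_{jk}\le\mu_{4+\delta}/(n^2p)$ already recorded in the excerpt, which follows by Markov's inequality from $(C1)$ together with the carefully chosen value $\varkappa=\delta/(2(4+\delta))$: the exponent $(1/2-\varkappa)(4+\delta)$ equals exactly $2$, so $\Pr\{A_{jk}\}\le C/(np)^2$ and $\E L_{jk}=p\Pr\{A_{jk}\}\le C/(n^2p)$. The crucial structural fact is that the variables $L_{jk}$ are independent $\{0,1\}$-valued random variables across $(j,k)$, being products of the independent $\xi_{jk}$ and $\mathbb I\{A_{jk}\}$.

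For the deviant-inadmissible event $\mathcal C_1$, every linked pair $(j,k)$ contributes at most two deviant indices, so $|\mathcal D_{\mathbf L}|\le 2N$ where $N=\sum_{j,k}L_{jk}$, and hence $\mathcal C_1\subseteq\{N\ge\tfrac12\sqrt{n/p}\}$. Since $N$ is a sum of $nm$ independent Bernoulli-type variables with mean $\E N\le Cm/(np)$ — much smaller than $\sqrt{n/p}$ under $(C0)$ — a Bennett/Chernoff-type deviation inequality yields an estimate of the shape
\[
\Pr\{N\ge\tfrac12\sqrt{n/p}\}\le\exp\bigl(-c\sqrt{n/p}\,\log\sqrt{np}\bigr),
\]
and given $np\ge c_0\log^{2/\varkappa}n$ this is vastly stronger than the required $n^{-c\log\log n}$.

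For the connected-inadmissible event $\mathcal C_2$, the combinatorial core is that $r=[\log n]$ pairwise-connected indices lie in a common connected component of the bipartite graph encoded by $\mathbf L_{\mathbf V}$, and a BFS truncation within that component produces a subtree on $r$ vertices with $r-1$ edges, all of which are present in $\mathbf L_{\mathbf V}$. Union-bounding over (i) the $\binom{n+m}{r}$ choices of $r$-subset, (ii) Cayley's count $r^{r-2}$ of labelled trees on $r$ vertices, and (iii) the probability $\le C/(n^2p)$ that each of the $r-1$ independent tree edges is present, one obtains
\[
\Pr\{\mathcal C_2\}\le\binom{n+m}{r}\, r^{r-2}\Big(\frac{C}{n^2p}\Big)^{r-1}.
\]
Bounding $\binom{n+m}{r}\le(e(n+m)/r)^r$, using that $n+m=O(n)$, and invoking $(C0)$ in the form $1/p\le n/(c_0\log^{2/\varkappa}n)$, the right-hand side collapses to $C^r r^{-2}\cdot n\cdot\log^{-(2/\varkappa)(r-1)}n$; substituting $r=[\log n]$ then gives exactly the advertised $n^{-c\log\log n}$ rate.

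The main obstacle I anticipate is the delicate bookkeeping in the $\mathcal C_2$ estimate: the combinatorial growth $\binom{n+m}{r}r^{r-2}$ is of size $(Cn)^r$ and must be absorbed precisely by the edge factor $(n^2p)^{-(r-1)}$, leaving a residual $p^{-(r-1)}$ that in turn must be killed by $(np)^{r-1}\ge\log^{(2/\varkappa)(r-1)}n$ coming from $(C0)$; this chain of cancellations is exactly what pins the exponent down to $\log\log n$, and any weaker form of $(C0)$ would fail to deliver super-polynomial decay. A secondary but important point is to make the BFS observation rigorous: pairwise connectedness of the given $r$ indices does not directly furnish a subtree on exactly those $r$ vertices, but it does yield a subtree on some $r$ vertices of the same connected component, which is all the counting argument requires.
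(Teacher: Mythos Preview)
Your proposal is correct and follows essentially the same route as the paper: split $\mathcal C=\mathcal C_1\cup\mathcal C_2$, feed the moment bound $\E L_{jk}\le C/(n^2p)$ into a tail estimate for the edge count to handle $\mathcal C_1$, and use a tree-counting union bound with $r=[\log n]$ for $\mathcal C_2$. The only differences are cosmetic: the paper bounds $\Pr\{\mathcal C_1\}$ by the direct union bound $\sum_{j\ge\sqrt{n/p}}\binom{nm}{j}(C/n^2p)^j$ rather than Bennett/Chernoff, and for $\mathcal C_2$ it uses the cruder edge-count $\binom{r^2}{r-1}$ in place of your Cayley factor $r^{r-2}$, arriving at the same $n^{-c\log\log n}$ rate.
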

\begin{proof}First, we estimate $\Pr\{\mathcal C_1\}$.
	The event $\mathcal C_1$ implies that there are at least $\sqrt{\frac{n}{p}}$ deviant indices, which in turn gives that there is at least $\sqrt{\frac{n}{p}}$ pairs $\{j,k\}$ such that $j\in[1,n]$, $k\in[1,m]$ and $L_{jk}=1$. Hence
	\begin{equation*}
	\Pr\{\mathcal C_1\}\le \sum_{j=\sqrt{\frac{n}{p}}}^{n}\binom{nm}{j}\bigg(\frac C{n^2p}\bigg)^j.
	\end{equation*}
By Stirling's formula, we have
\begin{equation*}
\binom{nm}{j}\left(\frac C{n^2p}\right)^j\le C \left(\frac C{\sqrt{np}}\right)^{j}
\end{equation*}
for $\sqrt{\frac{n}{p}}\le j\le n$.
This yields
\begin{equation*}
\Pr\{\mathcal C_1\}\le C\left(\frac{C}{\sqrt{np}
}\right)^{\sqrt{\frac np }}.
\end{equation*}

The estimate $\Pr\{\mathcal C_2\}$ almost repeats the proof of the bound for $\Pr\{\Delta_2\}$ in Lemma 3.11 of \cite{Aggar}. The event $\mathcal C_2$ implies that there exists a sequence of indices $\mathcal S=\{i_1,i_2,\ldots,i_r\}$ such that at least $r-1$ pair $(i_j,i_k)$ are linked. 
We have
\begin{equation*}
\Pr\{\mathcal C_2\}\le \binom{n+m}{r}\binom{r^2}{r-1}\left(\frac C{n^2p}\right)^{r-1}.
\end{equation*}
Applying Stirling's formula, get
\begin{equation*}
\Pr\{\mathcal C_2\}\le n^{-C\log\log n}.
\end{equation*}
\end{proof}
Now we fix the admissible configuration $\mathbf L_{\mathbf V}$. Let $R<\sqrt{\frac{n}{p}}$ denotes the number of the deviant indices. Consider the matrix
$\mathbf V_{\mathbf L}=(V_{\mathbf L}(j,k))$ with entries
$$
V_{\mathbf L}(j,k)=\begin{cases}0,\text{ if }1\le j,k\le n\text{ or } n+1\le j,k\le n+m,\\
\xi_{jk}a_{jk},\text{ if } 1\le j\le n, n+1\le k\le n+m\text{ and }L_{jk}=0,\\
\xi_{jk}b_{jk},\text{ if } 1\le j\le n, n+1\le k\le n+m\text{ and }L_{jk}=1,\\
\overline V_{kj}, \text{ if } n+1\le j\le n+m,\, 1\le k\le n.
\end{cases}
$$
Here $a_{jk}$ (resp. $b_{jk}$ ) are independent random variables with the distributions
\begin{equation*}
\Pr \{a_{jk}\in G\}=\Pr\{X_{jk}\in G\big|\mathcal A_{jk}^c\}
\end{equation*}
and
\begin{equation*}
\Pr \{b_{jk}\in G\}=\Pr\{X_{jk}\in G\big|\mathcal A_{jk}\}.
\end{equation*}
The permutation of rows and columns gives the matrix
\begin{equation*}
\mathbf V=\begin{bmatrix}&\mathbf V_{11}&\mathbf V_{12}\\&\mathbf V_{12}^*&\mathbf V_{22}\end{bmatrix}.
\end{equation*} 
The Hermitian matrix $\mathbf V_{11}$ of size $R\times R$ consists of type $b$ elements and has the form
\begin{equation*}
\mathbf V_{11}=
\begin{bmatrix}
&{\mathbf B_1} &0 \ldots &0 \ldots\\
&0 \ldots &\mathbf B_2& \ldots\\
&\ldots &\ldots &\ldots\\
&0 \ldots &0\ldots &\mathbf B_L
\end{bmatrix},
\end{equation*}
where  $\mathbf B_\nu$ are Hermitian matrices of order $r_\nu\le r$, $\nu=1,\ldots,L$.
The matrix $\mathbf V_{12}$ of size $R\times(m+n-R)$ consists of type $a$ elements and has the form
\begin{equation*}
\mathbf V_{12}=
\begin{bmatrix}
{\mathbf O_1} &{\mathbf A_1}
\end{bmatrix},
\end{equation*}
where $\mathbf O_{1}$ is a matrix of size $R\times m$ with zero elements, the matrix $\mathbf A_{1}$ is $R\times (n-R)$  with elements distributed by type $a$. 
The Hermitian matrix $\mathbf V_{22}$ of size $(n+m-R)\times (n+m-R)$ has the form
\begin{equation*}
\mathbf V_{22}=\begin{bmatrix}&\mathbf O_{11}&\mathbf A_{2}\\&\mathbf A_{2}^*&\mathbf O_{22}\end{bmatrix}.
\end{equation*} 
Here the square matrices $\mathbf O_{11}$ and $\mathbf O_{22}$ have zero elements and the orders $m$ and $n-R$ respectively, and the matrix $\mathbf A_{2}$ is $m\times (n-R)$  with elements distributed by type $a$.
The resolvent  $\mathbf R(z)=(\mathbf V-z\mathbf I)^{-1}$ can be represented as
\begin{equation*}
\mathbf R=\begin{bmatrix}
&\mathbf R_{11}&\mathbf R_{12}\\&\mathbf R_{12}^T&\mathbf R_{22}
\end{bmatrix},
\end{equation*}
where
\begin{align*}
\mathbf R_{11}=&(\mathbf V_{11}-z\mathbf I-\mathbf V_{12}(\mathbf V_{22}-z\mathbf I)^{-1}\mathbf V_{12}^*)^{-1},\\
\mathbf R_{12}=&(\mathbf V_{12}(\mathbf V_{22}-z\mathbf I)^{-1}\mathbf V_{12}^*-\mathbf V_{11}+z\mathbf I)^{-1}\mathbf V_{12}(\mathbf V_{22}-z\mathbf I)^{-1},\\
\mathbf R_{22}=&(\mathbf V_{22}-z\mathbf I)^{-1}+(\mathbf V_{22}-z\mathbf I)^{-1}\mathbf V_{12}^*\notag\\&\qquad\times(\mathbf V_{11}-z\mathbf I-\mathbf V_{12}(\mathbf V_{22}-z\mathbf I)^{-1}\mathbf V_{12}^*)^{-1}\mathbf V_{12}(\mathbf V_{22}-z\mathbf I)^{-1}.
\end{align*}
We will be primarily interested in estimating the spectral norm of the matrix $\mathbf R_{11} $ since it majorizes all elements of the matrix $\mathbf R_{11} $. Note that the dimension of the matrix $\mathbf R_{11}$ is equal to $R\times R$, where $R<\sqrt{\frac np}$.
Introduce a random matrix
$$
\mathbf Y=\mathbf V_{12}(\mathbf V_{22}-z\mathbf I)^{-1}\mathbf V_{12}^*.
$$
Note that
\begin{equation*}
\mathbf R^{(\mathbb J)}=(\mathbf V_{22}-z\mathbf I)^{-1}=\begin{bmatrix}
&\mathbf R^{(\mathbb J)}_{11}&\mathbf R^{(\mathbb J)}_{12}\\&{\mathbf R^{(\mathbb J)}_{12}}^T&\mathbf R^{(\mathbb J)}_{22}
\end{bmatrix}.
\end{equation*}
Given the form of the matrices $\mathbf V_{12}$ and $\mathbf V_{22}$, we find that 
$$
\mathbf Y=\mathbf A_{1}\mathbf R^{(\mathbb J)}_{22}\mathbf A_{1}^*.
$$
In these notation
$$
\mathbf R_{11}=(\mathbf V_{11}-z\mathbf I-\mathbf Y)^{-1}.
$$

In what follows we shall assume that $\mathbf L_\mathbf V$ is admissible. We prove that for the resolvent matrix $\mathbf R$ all entries are bounded conditioning by admissible $\mathbf L_\mathbf V$.
\begin{lem}\label{typical}
Let $\mathbf L_\mathbf V$ be admissible. Under conditions of Theorem \ref{main} there exists a constant $H$ such that for $z\in\mathcal D_{\mu}$
\begin{equation*}
\Pr\{\max_{1\le j,k\le n+ m}|R_{jk}|>H;\mathcal Q\}\le Cn^{-c\log\log n}.
\end{equation*}
\end{lem}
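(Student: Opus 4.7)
The plan is to follow Aggarwal's strategy \cite{Aggar}, exploiting that on $\mathcal D_{\mu}$ the product $y\,\im S_y(z)$ is bounded below by a positive constant $c_\mu$ (which follows from the Marchenko--Pastur density being of order $1/y$ in the bulk). By Lemma \ref{inadm} we may restrict attention to admissible configurations $\mathbf L_{\mathbf V}$, so that $R\le\sqrt{n/p}$ and each connected component has at most $[\log n]$ indices. After the permutation described above, the resolvent $\mathbf R$ splits into blocks $\mathbf R_{11},\mathbf R_{12},\mathbf R_{22}$; the matrix $\mathbf V_{11}$ is block-diagonal with Hermitian blocks $\mathbf B_\nu$ of order at most $\log n$, while $\mathbf V_{12}$ and $\mathbf V_{22}$ contain only $a$-type entries, truncated by $c_1(np)^{1/2-\varkappa}$.

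I would first apply the local law with truncation from \cite{mdpi} (the theorem stated before Theorem \ref{main}) to the typical sub-matrix $\mathbf V_{22}$. Its $a$-type entries have mean and variance within $O((np)^{-(3+\delta)(1/2-\varkappa)})$ of those of the original $X_{jk}$, so $(C0)$--$(C2)$ hold for $\mathbf V_{22}$ with slightly adjusted constants. On $\mathcal Q$ this gives $|\Lambda_n^{(\mathbb J)}|\le K\Gamma_n$, and combined with \eqref{000}, \eqref{002} and the Ward identity it yields a constant bound on every entry of $\mathbf R^{(\mathbb J)}=(\mathbf V_{22}-z\mathbf I)^{-1}$ together with $\frac{1}{m}\Tr\mathbf R_{22}^{(\mathbb J)}=yS_y(z)+o(1)$.

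The core step is a concentration estimate for $\mathbf Y=\mathbf A_1\mathbf R_{22}^{(\mathbb J)}\mathbf A_1^*$. Since $\mathbf A_1$ is independent of $\mathbf V_{22}$, has entries of variance $\sim 1/m$ bounded in modulus by $c_1(np)^{1/2-\varkappa}/\sqrt{mp}$, and since the Ward identity gives $\|\mathbf R_{22}^{(\mathbb J)}\|_F^2=v^{-1}\im\Tr\mathbf R_{22}^{(\mathbb J)}=O(n/v)$ on $\mathcal Q$, a conditional Hanson--Wright-type inequality yields
$$
\big\|\mathbf Y-\alpha(z)\mathbf I\big\|_{\mathrm{op}}\le\varepsilon_n,\qquad \alpha(z):=\frac{1}{m}\Tr\mathbf R_{22}^{(\mathbb J)},
$$
with exceptional probability at most $n^{-c\log\log n}$ after a union bound over the $R^2\le n/p$ entries. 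Since $\im\alpha(z)=y\im S_y(z)+o(1)\ge c_\mu/2$ on $\mathcal D_{\mu}$, this forces $\im(z\mathbf I+\mathbf Y)\succeq\tfrac{c_\mu}{2}\mathbf I$, and consequently
$$
\|\mathbf R_{11}\|=\|(\mathbf V_{11}-z\mathbf I-\mathbf Y)^{-1}\|\le 2/c_\mu.
$$
Crucially, this bound is insensitive to the (possibly very large) $b$-type entries of $\mathbf V_{11}$: the required spectral gap is produced by $\im\mathbf Y$ alone.

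Finally, the mixed block $\mathbf R_{12}=-\mathbf R_{11}\mathbf V_{12}\mathbf R^{(\mathbb J)}$ and the typical block $\mathbf R_{22}=\mathbf R^{(\mathbb J)}+\mathbf R^{(\mathbb J)}\mathbf V_{12}^*\mathbf R_{11}\mathbf V_{12}\mathbf R^{(\mathbb J)}$ are handled entrywise by combining $\|\mathbf R_{11}\|\le 2/c_\mu$ with a concentration bound on the linear forms $\sum_{\ell}V_{12,k\ell}R^{(\mathbb J)}_{\ell j}$, whose conditional mean is negligible and whose conditional variance $\im R^{(\mathbb J)}_{jj}/(mv)$ is controlled by the Ward identity on $\mathcal Q$. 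The main obstacle is the concentration of $\mathbf Y$: because the $a$-type entries of $\mathbf A_1$ are only truncated at $O((np)^{-\varkappa})$ times their standard deviation, the Hanson--Wright constants degrade polynomially in $(np)^\varkappa$, and one has to check that this degradation is absorbed by condition $(C0)$---which guarantees $(np)^\varkappa\ge\log n$---so that the exceptional probability remains at the $n^{-c\log\log n}$ scale demanded by Lemma \ref{inadm}.
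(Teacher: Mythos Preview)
Your proposal follows the same Aggarwal-style decomposition as the paper: split the indices into typical and deviant according to $\mathbf L_{\mathbf V}$, control the typical resolvent $\mathbf R^{(\mathbb J)}=(\mathbf V_{22}-z\mathbf I)^{-1}$, then bound $\mathbf R_{11}$ via the imaginary part of $z\mathbf I+\mathbf Y$. The treatment of the deviant block---concentration of $\mathbf Y=\mathbf A_1\mathbf R^{(\mathbb J)}_{22}\mathbf A_1^*$ around $\alpha(z)\mathbf I$ and the observation that $\mathbf V_{11}$ is Hermitian so only $\im(z+\alpha)$ matters---is essentially the content of the paper's Lemma~\ref{lem_2.3}.

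The main structural difference is how you obtain entrywise control of the \emph{typical} resolvent $\mathbf R^{(\mathbb J)}$. The paper does \emph{not} import this from \cite{mdpi}; instead it runs a self-contained multi-scale bootstrap: starting from the trivial bound $\max_{j,l}|R^{(\mathbb J)}_{jl}(u+iV)|\le V^{-1}$, Lemma~\ref{lem_2.3} propagates the entrywise bounds from scale $s_0v$ down to scale $v$ (feeding the concentration estimates for $\varepsilon_j$ via the multiplicative inequality $|R_{jk}(u+iv)|\le s_0|R_{jk}(u+is_0v)|$), and iterating $L(v_0,s_0)$ times reaches $v_0$. Your shortcut---applying the truncated local law directly to $\mathbf V_{22}$, whose $a$-type entries satisfy $(C2)$---is a reasonable alternative in principle, but your stated justification has a gap. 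The theorem you cite from \cite{mdpi} bounds only $|\Lambda_n^{(\mathbb J)}|$, and the claim ``combined with \eqref{000}, \eqref{002} and the Ward identity it yields a constant bound on every entry'' is circular: equations \eqref{000}--\eqref{002} contain the correction terms $\varepsilon_j$, whose control already \emph{presupposes} entrywise bounds on $\mathbf R^{(\mathbb J\cup\{j\})}$. You would either have to invoke the full entrywise local law from \cite{mdpi} (available under $(C2)$, but not the statement quoted in this paper), or reproduce the bootstrap---which is exactly the route the paper takes.

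A second, more technical point: your passage from entrywise concentration of $\mathbf Y-\alpha(z)\mathbf I$ to an operator-norm bound ``after a union bound over the $R^2\le n/p$ entries'' is not sufficient as written. A union bound controls $\max_{jk}|(\mathbf Y-\alpha\mathbf I)_{jk}|$; converting to $\|\cdot\|_{\mathrm{op}}$ costs a factor of $R\le\sqrt{n/p}$, which at $v=v_0$ is not absorbed by the entrywise bound of order $(nv)^{-1/2}$. The paper's write-up glosses over the same step, so this is not a divergence from the paper but a place where both arguments would benefit from an explicit justification (e.g.\ exploiting the block structure of $\mathbf V_{11}$ or a sharper matrix concentration bound for $\mathbf A_1\mathbf R^{(\mathbb J)}_{22}\mathbf A_1^*$).
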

 Note that $\mathcal T_{\mathbf L}\cup\mathcal D_{\mathbf L}=[1,n+m]$, $\mathbb J \subset [1,n+m]$. We introduce the events
$$
\mathcal C_1(v,k)=\bigcap_{|\mathbb J|\le k}\bigg\{\max_{j,l\in\mathcal T_{\mathbf L}}|R^{(\mathbb J)}_{jl}(u+iv)|\le H_1\bigg\}
$$
and
$$
\mathcal C_2(v,k)=\bigcap_{|\mathbb J|\le k}\bigg\{\max_{j\in\mathcal D_{\mathbf L},1\le l\le n+m}|R^{(\mathbb J)}_{jl}(u+iv)|\le {H_2}\bigg\}.
$$

The following lemma holds.
\begin{lem}\label{lem_2.3}Under the conditions of the Theorem \ref{main}, the inequalities
	\begin{equation}\label{typic}
	\Pr\Big\{\mathcal C_1(v,k-1);\mathcal C_1(sv,k)\cap\mathcal C_2(sv,k)\cap\mathcal Q\Big\}\ge 1-Cn^{-c\log\log n}
	\end{equation}
	and
	\begin{equation}\label{devia}
	\Pr\Big\{\mathcal C_2(v,k-1);\mathcal C_1(sv,k)\cap\mathcal C_2(sv,k)\cap\mathcal Q\Big\}\ge 1-Cn^{-c\log\log n}
	\end{equation}
	are valid.
\end{lem}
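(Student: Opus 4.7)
I would prove \eqref{typic} and \eqref{devia} jointly by downward induction along the scale cascade $v\to sv$ with $s>1$ fixed. The base case is the largest scale $v=V$, where all resolvent entries are trivially bounded by $V^{-1}$. At each step I assume the hypothesis $\mathcal C_1(sv,k)\cap\mathcal C_2(sv,k)\cap\mathcal Q$ and derive $\mathcal C_1(v,k-1)$ and $\mathcal C_2(v,k-1)$ with exceptional probability at most $n^{-c\log\log n}$. Throughout I work on the admissibility event: deviant components have size at most $\log n$, and the total number of deviants is $R<\sqrt{n/p}$.

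For the typical case \eqref{typic}: when $j\in\mathcal T_{\mathbf L}$, every entry $\xi_{jl}X_{jl}$ in row $j$ is either zero or bounded by $c_1(np)^{1/2-\varkappa}$ almost surely, so admissibility enforces the truncation hypothesis $(C2)$ of \cite{mdpi} along typical rows. Starting from \eqref{000} I rewrite
\begin{equation*}
R^{(\mathbb J)}_{jj} = \frac{S_y(z)}{1 + S_y(z)\varepsilon^{(\mathbb J)}_j - yS_y(z)\Lambda^{(\mathbb J)}_n},
\end{equation*}
and control each $\varepsilon^{(\mathbb J)}_{jk}$ by Bernstein/Hanson--Wright concentration conditioned on the rows indexed by $\mathbb J\cup\{j\}$, using the inductive bound on $\|\mathbf R^{(\mathbb J\cup\{j\})}(u+isv)\|_{\mathrm{op}}$ together with the Lipschitz identity $\mathbf R(u+iv)-\mathbf R(u+isv)=i(s-1)v\,\mathbf R(u+iv)\mathbf R(u+isv)$ to transfer from scale $sv$ to $v$. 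On $\mathcal Q$ we have $|\Lambda^{(\mathbb J)}_n|\le\gamma+O((nv)^{-1})$ by \cite[Cor.\ 3]{mdpi}. Combined with the lower bound $|b(z)|\ge c(\mu)$ on $\mathcal D_\mu$, this forces $R^{(\mathbb J)}_{jj}$ into a small neighborhood of $S_y(z)$, giving $|R^{(\mathbb J)}_{jj}|\le H_1$. Off-diagonal typical entries follow from an analogous Schur identity.

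For the deviant case \eqref{devia}: I would use the block identity $\mathbf R_{11}=(\mathbf V_{11}-z\mathbf I-\mathbf Y)^{-1}$ with $\mathbf Y=\mathbf A_1\mathbf R^{(\mathbb J)}_{22}\mathbf A_1^*$. By admissibility, $\mathbf V_{11}$ is block-diagonal with blocks of size at most $\log n$ and has small operator norm after normalization. The entries of $\mathbf A_1$ are of type $a$, hence bounded by $c_1(np)^{1/2-\varkappa}$, so a conditional Hanson--Wright inequality for $\mathbf Y$---conditioning on $\mathbf R^{(\mathbb J)}_{22}$, whose entries are controlled by the inductive hypothesis $\mathcal C_1(sv,k)$---yields
\begin{equation*}
\mathbf Y = \bigl(\tfrac{1}{m}\Tr\mathbf R^{(\mathbb J)}_{22}\bigr)\mathbf I_R + \mathbf E = yS_y(z)\mathbf I_R + \mathbf E,
\end{equation*}
with $\|\mathbf E\|_{\mathrm{op}}\le\tfrac12|z+yS_y(z)|$ off an event of probability $n^{-c\log\log n}$ (the last equality uses $\mathcal Q$). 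Since $|z+yS_y(z)|\ge c(\mu)>0$ on $\mathcal D_\mu$, a Neumann expansion gives $\|\mathbf R_{11}\|_{\mathrm{op}}\le 2/c(\mu)=:H_2$, and the block formulas for $\mathbf R_{12}$ and the deviant rows of $\mathbf R_{22}$ inherit this bound. A union bound over $|\mathbb J|\le k$ (at most $(n+m)^k$ choices with $k\le C\log n$) and over an $n^{-C}$-net in $u$ absorbs into the final probability estimate.

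The principal obstacle is the Hanson--Wright step for $\mathbf Y$: I need operator-norm (not entrywise) concentration of a quadratic form whose coefficients are bounded only by the weak truncation $(np)^{1/2-\varkappa}$, and the error must beat the absolute gap $c(\mu)$ in the denominator, uniformly over all admissible configurations with $R\le\sqrt{n/p}$ deviants and reductions $|\mathbb J|\le k$. This is precisely why the spectral-edge separation $\mathcal D_\mu$ is required: without $\mu$-separation, $|z+yS_y(z)|$ could vanish and the perturbation argument would collapse.
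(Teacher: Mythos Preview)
Your treatment of the typical case \eqref{typic} is essentially the same as the paper's: both use the representation \eqref{000}, decompose $\varepsilon_j$ into its three pieces, bound them via Rosenthal-type moment inequalities using the truncation available along type-$a$ rows, and transfer from scale $sv$ to $v$ via the resolvent Lipschitz identity.

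In the deviant case \eqref{devia} there is a genuine gap. You write that ``$\mathbf V_{11}$ \ldots\ has small operator norm after normalization'' and then invert by a Neumann series around $-(z+yS_y(z))\mathbf I$. This is false: $\mathbf V_{11}$ is built precisely from the \emph{large} (type-$b$) entries $\frac{1}{\sqrt{mp}}\xi_{jk}b_{jk}$ with $|b_{jk}|\ge C(np)^{1/2-\varkappa}$, and under $(C1)$ alone these entries carry no upper bound. In the sparse regime $np\asymp\log^{2/\varkappa}n$ a single entry of $\mathbf V_{11}$ can be of polynomial size in $n$ with probability far exceeding $n^{-c\log\log n}$, so $\|\mathbf V_{11}\|$ is not small on the event you work on and the Neumann expansion collapses. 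The paper's argument avoids this entirely: it never bounds $\|\mathbf V_{11}\|$. Instead it uses that $\mathbf V_{11}$ is Hermitian, so for any complex shift $w$ one has the deterministic bound $\|(\mathbf V_{11}-w\mathbf I)^{-1}\|\le 1/|\im w|$. After showing (entrywise, via the bilinear-form Lemma~\ref{bilin}) that $\mathbf Y$ concentrates near $\bigl(yS_y(z)-\tfrac{1-y}{z}\bigr)\mathbf I$, the relevant shift is $w=z-\tfrac{1-y}{z}+yS_y(z)$, whose imaginary part satisfies $\im w\ge\tfrac{\sqrt 2}{2}|b(z)|\ge c(\mu)$ on $\mathcal D_\mu$; a resolvent perturbation in the small error $\mathbf Y-\bigl(yS_y(z)-\tfrac{1-y}{z}\bigr)\mathbf I$ then yields $\|\mathbf R_{11}\|\le C/|b(z)|$, hence the bound $H_2$ regardless of how large $\|\mathbf V_{11}\|$ is.

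Two smaller points. First, your concentration target for $\mathbf Y$ is $yS_y(z)\mathbf I$, but the actual diagonal limit is $yS_y(z)-\tfrac{1-y}{z}$; this matters because it is $\im\bigl(z-\tfrac{1-y}{z}+yS_y(z)\bigr)$, not $|z+yS_y(z)|$, that is bounded below on $\mathcal D_\mu$. Second, invoking an operator-norm Hanson--Wright for $\mathbf Y$ directly is delicate since $\mathbf Y$ is $R\times R$ with $R$ as large as $\sqrt{n/p}$; the paper controls $\mathbf Y$ entrywise via Lemma~\ref{bilin} and only then passes to the norm, a step you should make explicit.
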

\begin{proof}For simplicity, we assume that $k=1$.
	We begin by proving the inequality \eqref{typic}. Since both indices are typical, the corresponding matrix elements in the rows (and columns) with numbers $j,k$ are of type $a$.
	Consider the diagonal elements.
	For $j\in\mathcal T_{\mathbf L}\cap[1,n]$ the equality
	\begin{equation*}
	R_{jj}=yS_y(z)\Big(1+\varepsilon_jR_{jj}+\Lambda_nR_{jj}\Big)
	\end{equation*}
	holds.
	For $\omega\in\mathcal Q$ we have
	$$
	|\Lambda_n|\le \frac12.
	$$
	Hence,
	\begin{equation*}
	|R_{jj}|\mathbb I\{\mathcal Q\}\le 2\sqrt y(1+|\varepsilon_j||R_{jj}|)\mathbb I\{\mathcal Q\}.
	\end{equation*}
	Let
	\begin{equation*}
	\varepsilon_j=\varepsilon_{j1}+\varepsilon_{j2}+\varepsilon_{j3}
	\end{equation*}
	with
	\begin{align*}
	\varepsilon_{j1}=&\frac1{m}\sum_{l=1}^mR^{(j)}_{l+n,l+n}-\frac1{m}\sum_{l=1}^mR_{l+n,l+n},\notag\\
	\varepsilon_{j2}=&\frac1{mp}\sum_{l=1}^m(a^2_{jl}\xi_{jl}-p)R^{(j)}_{l+n,l+n},\notag\\
	\varepsilon_{j3}=&\frac1{mp}\sum_{l,t=1}^ma_{jl}a_{jt}\xi_{jl}\xi_{jt}R^{(j)}_{l+n,t+n}.
	\end{align*}
	Note that for admissible configurations
	\begin{equation*}
	|\mathcal D_{\mathbf L}|\le \sqrt{\frac np}.
	\end{equation*}
	By \cite[Lemma 1]{mdpi},
	\begin{equation*}
	|\varepsilon_{j1}|\le \frac C{nv}.
	\end{equation*}
	Next, note that
	\begin{equation*}
	\frac1n\sum_{l=1}^m|R^{(j)}_{l+n,l+n}|^2\mathbb I\{\mathcal C_1(sv,1)\}\mathbb I\{\mathcal C_2(sv,1)\}\mathbb I\{\mathcal Q\}\le
	{H_2^2s^2}+H_1^2s^2
	\end{equation*}
	and
	\begin{equation*}
	\frac1n\sum_{l=1}^m |R^{(j)}_{l+n,l+n}|^q\mathbb I\{\mathcal C_1(sv,1)\}\mathbb I\{\mathcal C_2(sv,1)\}\mathbb I\{\mathcal Q\}\le 
	{H_2^qs^q}+H_1^qs^q.
	\end{equation*}
	We used here the so-called multiplicative inequality: for any $s\ge 1$
	\begin{equation*}
	|R_{jj}(u+iv)|\le s|R_{jj}(u+isv)|.
	\end{equation*}
	Given the above, get
	\begin{align*}
	\E|\varepsilon_{j2}|^q\mathbb I\{\mathcal C_1(sv,1)\}\mathbb I\{\mathcal C_2(sv,1)\}\mathbb I\{\mathcal Q\}&\le \frac {C^qq^{\frac q2}s^{q}H_2^{q}}{(np)^{\frac q2}}+\frac {C^qs^{q}q^{\frac q2}H_1^q}{(np)^{\frac q2}}\notag\\&+\frac{C^qq^qH_2^qs^{q}}{(np)^{2\varkappa q+1}}+\frac{C^qq^qH_1^{q}s^{q}}{(np)^{2\varkappa q+1}}.
	\end{align*}
	Similarly,
	\begin{align*}
	\E|\varepsilon_{j3}|^q|R_{jj}|^q\mathbb I\{\mathcal C_1(sv,1)\}&\mathbb I\{\mathcal C_2(sv,1)\}\mathbb I\{\mathcal Q\}\le
	\frac{C^qq^q}{(nv)^q}a_n^{\frac q2}(z)
	+\frac{C^qq^{\frac {3q}2}s^{\frac {q}2}H_2^{\frac q2}}{(nv)^{\frac q2}(np)^{\varkappa q+1}}
	\notag\\&
	+\frac{C^qq^{\frac {3q}2}s^{\frac {3q}2}H_1^{\frac q2}}{(nv)^{\frac q2}(np)^{\varkappa q+1}}+
	\frac{C^qq^{2q}s^{2q}H_2^{2q}}{(np)^{2\varkappa q+2}}+\frac{C^qq^{2q}s^{2q}H_1^{2q}p}{(np)^{2\varkappa q+2}}.
	\end{align*}
	Here we used the fact that 
	\begin{align*}
	|R_{jk}(u+iv)|\le |R_{jk}(u+iv)|+(s-1)v|[\mathbf R(u+iv)\mathbf R(u+sv)]_{jk}|\le |R_{jk}(u+isv)|\notag\\+
	(s-1)\sqrt{\im R_{jj}\im R_{kk}}\le sH_1
	\end{align*}
	for $j,k\in \mathcal T_{\mathbf L}$, and
	\begin{equation*}
	|R_{jk}(u+iv)|\le {sH_2}
	\end{equation*}
in the case $j\in\mathcal D_{\mathbf L}$ or $k\in\mathbf D_{\mathbf L}$.

If $(np)^{{2\varkappa}}|b(z)|\ge C q^2sH_1$ and $(np)^{\varkappa}>CqsH_1$, then $H_1$ and $H_2$ can be chosen so that
\begin{equation*}
\E|R_{jj}|^q\mathbb I\{\mathcal C_1(sv,1)\}\mathbb I\{\mathcal C_2(sv,1)\}\mathbb I\{\mathcal Q\}\le H_1^p.
\end{equation*}
Now consider the case of deviant indices. Let $j\in\mathcal D_{\mathbf L}$ and $k$  be arbitrary.
Consider the matrix
\begin{equation*}
\mathbf Y=\mathbf V_{12}(\mathbf V_{22}-z\mathbf I)^{-1}\mathbf V_{12}^*=\mathbf A_{1}\mathbf R^{(\mathbb J)}_{22}\mathbf A_{1}^*.
\end{equation*}
We estimate the matrix $\mathbf Y$ elementwise. We start with off-diagonal elements. Consider $Y_{12}$.
The equality
$$
Y_{12}=\frac1{mp}\sum_{l,t}a_{1l}\overline a_{2t}[\mathbf R_{22}^{(\mathbb J)}]_{lt}
$$
holds.
Note that $\{a_{1l}\}$ and $\{a_{2t}\}$ are independent. We can apply the lemma \ref{bilin} with $\mathbf A=\mathbf R_{22}^{(\mathbb J)}$. 
By the assumption $\mathcal C_1\cap\mathcal C_2\cap{\mathcal Q}$ we get
\begin{equation*}
\|\mathbf A\|^2\le \gamma\frac{na_n(z)}{v}+\frac{r}{v},
\end{equation*}
and
\begin{equation*}
\sum_{j=1}^n\mathcal L_j^q\le \frac{H_2^{\frac q2}s^{\frac q2}}{pv^{\frac q2}|b(z)|^{\frac q2}}+\frac{H_1^{\frac q2}s^{\frac q2}n}{v^{\frac q2}}.
\end{equation*}
Finally,
\begin{equation*}
\sum_{i,j\in\mathbb T\setminus \mathbb J}|[\mathbf R^{(\mathbb J)}_{22}]_{ij}|^q\le \frac{H_2^{q-2}s^{q-2}n}{|b(z)|^{q-2}v}(\gamma a_n(z)+\frac{r}{nv}).
\end{equation*}
Further, we have
\begin{equation*}
\mu_{\xi}^{(q)},\mu_{\eta}^{(q)}\le p(np)^{-2-\varkappa q}
\end{equation*}
for $q\ge 4+\delta$, and
\begin{equation*}
\mu_{\xi}^{(q)},\mu_{\eta}^{(q)}\le p\mu_{4+\delta}^{\frac q{4+\delta}}/(np)^{\frac q2}
\end{equation*}
for $q\le 4$.
Combining all the estimates, we obtain
\begin{align*}
\mathcal A_1\le&
\frac{C^qq^{\frac q2}}{n^{q}}\Big(\frac1{q^{\frac q2}}+\frac{q^{\frac q2}}{(np)^{\frac q2\varkappa}}+\frac1{(np)^2}\frac{q^q}{(np)^{q\varkappa}}\Big),\\
\mathcal A_2\le&\frac {C^qq^{\frac{3q}2}}{n^{\frac q2+1}(np)^{ q\varkappa}}
,\\
\mathcal A_3\le &\frac{C^qq^{2q}}{n^2(np)^{2\varkappa q +2}}.
\end{align*}
Finally we get, for $np\ge C\log n^{\frac1{\varkappa}}$, 
\begin{align*}
\E|Y_{12}|^q\mathbb I\{\mathcal C_1\}\mathbb I\{\mathcal C_2\}\mathbb I\{\mathcal Q\}\le \frac{C^qq^{\frac q2}}{(nv)^{\frac q2}}\Big(a_n^{\frac q2}(z)+\frac{r^{\frac q2}}{(nv)^{\frac q2}}\Big)+\frac{C^qH_2^{\frac q2}s^{\frac q2}q^{\frac{3q}2}}{(nv)^{\frac q2}(np)^{q\varkappa+1}|b(z)|^{\frac q2}}\\+\frac{C^qq^{\frac{3q}2}
	H_1^{\frac q2}s^{\frac q2}}{(nv)^{\frac q2}(np)^{q\varkappa}}+\frac{C^qq^{2q}H_1^qs^q}{(np)^{2\varkappa q+2}}+\frac{C^qH_2^qs^q}{|b(z)|^q(np)^{2\varkappa q+3}}.
\end{align*}
Applying Chebyshev's inequality with $q\sim\log n$, we conclude that
\begin{align}\label{Y_{12}}
\Pr\Big\{|Y_{12}|\ge C\log n\Big(\frac{a_n(z)}{\sqrt{nv}}+\frac{\log^{\frac32}n}{\sqrt{nv}(np)^{\varkappa}|b(z)|^{\frac 12}}+\frac{\log^2n}{(np)^{2\varkappa}|b(z)|}\Big)\Big\}\le Cn^{-c\log\log n}.
\end{align}
Now consider the diagonal elements.
\begin{equation*}
Y_{11}= \sum_{l,t}a_{1l}a_{1t}[\mathbf R^{(\mathbb J)}_{22}]_{lt}.
\end{equation*}
Represent $Y_{11}$ as
\begin{equation*} 
Y_{11}=\sum_{l}a_{1l}^2[\mathbf R^{(\mathbb J)}_{22}]_{ll}+\sum_{l\ne t}a_{1l}a_{1t}[\mathbf R^{(\mathbb J)}_{22}]_{lt}=:\widehat Y_{11}+\widetilde Y_{11}.
\end{equation*}
Applying the inequality for quadratic forms, obtain
\begin{align*}
\E|\widetilde Y_{11}|^q\mathbb I\{\mathcal C_1(sv,1)\}\mathbb I\{\mathcal C_2(sv,1)\}\mathbb I\{\mathcal Q\}&\le C^q\Big(q^q(\E|a_{11}|^2)^{q}\E\|\mathbf R^{(\mathbb J)}\|^q\mathbb I\{\mathcal C_2(sv,1)\}\mathbb I\{\mathcal Q\}\\&+
q^{\frac{3q}2}\mu^{(q)}(\E|a_{11}|^2)^{\frac q2}\sum_l\E\big(\sum_t|[\mathbf R^{(\mathbb J)}_{22}]_{lt}|^2\big)^{\frac q2}\\+q^{2q}(\mu^{(q)})^2\sum_{l,t}&|[\mathbf R^{(\mathbb J)}_{22}]_{lt}|^q\Big)\mathbb I\{\mathcal C_1(sv,1)\}\mathbb I\{\mathcal C_2(sv,1)\}\mathbb I\{\mathcal Q\}.
\end{align*}
From here it is easy to get
\begin{align*}
\E|\widetilde Y_{11}|^q\mathbb I\{\mathcal C_1(sv,1)\}\mathbb I\{\mathcal C_2(sv,1)\}\mathbb I\{\mathcal Q\}&\le C^q\Big(\frac{q^qr^{\frac q2}a_n^{\frac q2}(z)}{(nv)^{\frac q2}}+\frac{q^{\frac{3q}2C^{q}}}{(nv)^{\frac q2}(np)^{\varkappa q+2}|b(z)|^{\frac q2}}\\&\qquad\qquad+\frac{C^qq^{2q}}{(np)^{2\varkappa q+3}|b(z)|^{q}}+\frac{C^q}{(np)^{2\varkappa q+2}}\Big).
\end{align*}
This yields
\begin{align*}
\Pr\Big\{|\widetilde Y_{11}|&\ge C\Big(\frac{\log^2 n\log\log na_n^{\frac12}(z)}{\sqrt{nv}}+\frac{\log^{\frac 52}n}{\sqrt{nv}(np)^{\varkappa}\sqrt{|b(z)|}}\\&+\frac{\log^3 n}{(np)^{2\varkappa}|b(z)|}\Big);\mathcal C_1(sv,1)\cap\mathcal C_2(sv,1)\cap\mathcal Q\Big\}\le Cn^{-\log\log n}.
\end{align*}
Now consider $ \widehat Y_{11}$. We have
\begin{align*}
\widehat Y_{11}=&\frac yn\sum_{l}R^{(\mathbb J)}_{ll}+\sum_{l}(a_{1l}^2-\E a_{1l}^2)R^{(\mathbb J)}_{ll}\\&=yS_y(z)-\frac{1-y}z+y\Lambda_n(z)+\frac{r}{nv}+\sum_{l}(a_{1l}^2-\E a_{1l}^2)R^{(\mathbb J)}_{ll}.
\end{align*}
By Rosenthal's inequality,
\begin{align*}
\E\Big|\sum_{l}(a_{1l}^2-\E a_{1l}^2)R^{(\mathbb J)}_{ll}\Big|^q\mathbb I\{\mathcal C_1(sv,1)\}\mathbb I\{\mathcal C_2(sv,1)\}\mathbb I\{\mathcal Q\}\le &
C^q\Big(\frac{q^{\frac q2}s^q}{(np)^{\frac q2}}
\\+\frac{q^{\frac q2}s^q}{(np)^{q}|b(z)|^q}
+\frac{s^qq^q}{(np)^{2\varkappa q+2}|b(z)|^q}\Big).&
\end{align*}
The obtained bounds give
	\begin{align*}
	\Pr\Big\{&\Big|\widehat Y_{11}-\Big(-\frac{1-y}z+yS_y(z)\Big)\Big|\ge C\Big(\gamma a_n(z)+\frac r{nv}\notag\\&+\frac{\log ^{\frac32}n}{(np)^{\frac12}}+\frac{\log^{\frac32}}{(np)|b(z)|}+\frac{\log^2n}{(np)^{2\varkappa}|b(z)|}\Big);\mathcal C_1\cap\mathcal C_2\cap\mathcal Q\Big\}\le Cn^{-\log\log n}.
	\end{align*}
Summing up the estimates for $\widehat Y_{11}$ and $\widetilde Y_{11}$, we conclude that
\begin{equation*}
\Pr\Big\{\Big|Y_{11}-\Big(yS_y(z)-\frac{1-y}z\Big)\Big|\ge\mathcal  G_1+\mathcal G_2;\mathcal C_1\cap\mathcal C_2\cap\mathcal Q\Big\}\le Cn^{-c\log n},
\end{equation*}
where
\begin{align*}
\mathcal G_1=&\gamma a_n(z),\\
\mathcal G_2=&C\Big(\frac r{nv}+\frac{\log ^{\frac32}n}{(np)^{\frac12}}+\frac{\log^{\frac32}n}{(np)|b(z)|}+\frac{\log^2n}{(np)^{2\varkappa}|b(z)|}\\&+\frac{\log^2 n\log\log na_n^{\frac12}(z)}{\sqrt{nv}}+\frac{\log^{\frac 52}n}{\sqrt{nv}(np)^{\varkappa}\sqrt{|b(z)|}}+\frac{\log^3 n}{(np)^{2\varkappa}|b(z)|}\Big).
\end{align*}
It is easy to show that if
\begin{equation*}
|b(z)|\ge C\log n^{\frac 32}n\bigg(\frac1{\sqrt{nv}}+\frac1{(np)^{\varkappa}}\bigg),
\end{equation*}
then
\begin{equation*}
\Pr\Big\{\Big|Y_{11}-\Big(yS_y(z)-\frac{1-y}z\Big)\Big|\le \gamma|b(z)|;\mathcal C_1\cap\mathcal C_2\cap\mathcal Q\Big\}
\le Cn^{-c\log n}
\end{equation*}
with an arbitrarily small constant $\gamma$. From this and the inequality \eqref{Y_{12}} it follows that

\begin{equation*}
\Pr\Big\{\Big\|\mathbf Y-\Big(yS_y(z)-\frac{1-y}z\Big)\mathbf I\Big\|\ge \gamma|b(z)|;\mathcal C_1\cap\mathcal C_2\cap\mathcal Q\Big\}\ge 1-Cn^{-c\log n}.
\end{equation*}
Since the matrix $\mathbf V_{11}$ is Hermitian (the eigenvalues are real), and
\begin{equation*}
\im\Big(z-\frac{1-y}z+yS_y(z)\Big)\ge \frac {\sqrt 2}2|b(z)|,
\end{equation*}
we find that
\begin{equation*}
\Pr\Big\{\Big\|\Big(\mathbf V_{11}-\Big(z-\frac{1-y}z+yS_y(z)\Big)-\mathbf Y\Big)^{-1}\Big\|\le \frac C{|b(z)|};\mathcal C_1\cap\mathcal C_2\cap\mathcal Q\Big\}\ge 1-Cn^{-c\log\log n}.
\end{equation*}
This, in particular, implies that 
\begin{equation*}
\Pr\Big\{|R_{jk}|\le {H_2};\mathcal C_1(sv,1)\cap\mathcal C_2(sv,1)\cap\mathcal Q\Big\}\ge 1-Cn^{-c\log\log n},
\end{equation*}
for $j\in\mathcal D$.
The last statement completes the proof of the Lemma \ref{lem_2.3}.
 \end{proof}
\begin{proof}[Proof of Lemma \ref{typical}]
Let $k=|\mathbb J|$. Lemma \ref{lem_2.3} and inequality $\max_{j,l}|R^{(\mathbb J)}_{jl}(V)|\le V^{-1}$ imply
	\begin{equation*}
	\Pr\Big\{\mathcal C_1(v,k-1);\mathcal Q\Big\}\ge 1-Cn^{-c\log\log n},
	\end{equation*}
	\begin{equation*}
	\Pr\Big\{\mathcal C_2(v,k-1);\mathcal Q\Big\}\ge 1-Cn^{-c\log\log n},
	\end{equation*}
for $V/s_0\le v\le V$. We may repeat this procedure $L(v_0,s_0)$ times and obtain
\begin{equation*}
\Pr\{\max_{1\le j,k\le n+ m}|R_{jk}(v)|>H;\mathcal Q\}\le Cn^{-c\log\log n},
\end{equation*}
for $v\ge V/s_0^L=v_0$.
 \end{proof}

\begin{proof}[Proof of Lemma \ref{lem_main}]
We recall that Lemma \ref{inadm} gives
\begin{equation*}
\Pr\{\mathbf L_{\mathbf V}\notin\mathcal A\}\le Cn^{-c\log\log n}.
\end{equation*}
It implies Lemma \ref{lem_main}.
 \end{proof}

\section{Appendix} \label{aux}
Let $\xi_1,\ldots,\xi_n$ and $\eta_1,\ldots,\eta_n$ be mutually independent random variables,  $A=(a_{ij})_{i,j=1}^n$. Define
$$
\mathcal L_j^2=\sum_{i=1}^n|a_{ij}|^2.
$$
Note that
$$
\|A\|^2=\sum_{j=1}^n\mathcal L_j^2.
$$
\begin{lem}\label{bilin} For any $q\ge 2$
	the inequality
	\begin{equation*}
	\E|\sum_{i,j=1}^na_{ij}\xi_i\eta_j|^q\le C^q\big(\mathcal A_1\|A\|^q+\mathcal A_2(\sum_{j=1}^n\mathcal L_j^q)+\mathcal A_3(\sum_{i,j=1}^n|a_{ij}|^q)\big)
	\end{equation*}
	holds, where
	\begin{align*} \mathcal A_1=&q^{\frac{3q}2}(\sigma_{\xi}^{2q}+\sigma_{\eta}^{2q}),\\ 
 \mathcal A_2=&q^{\frac{3q}2}(\sigma_{\xi}^{(2q)}+\sigma_{\eta}^{2q})^{\frac{q-6}{2(q-4}}(\mu_{\xi}^{(\frac q2)})^{\frac{2(q-2)}{q-4}},\\
 \mathcal A_3=&q^{2q}\mu_{\xi}^{(q)}\mu_{\eta}^{(q)}.\end{align*}
\end{lem}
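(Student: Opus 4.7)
The approach I would take is a two-stage decoupling combined with the scalar Rosenthal inequality, applied once in each coordinate. First fix the $\xi_i$'s and write $Z := \sum_{i,j} a_{ij}\xi_i\eta_j = \sum_j b_j \eta_j$, where $b_j := \sum_i a_{ij}\xi_i$. Conditionally on $\{\xi_i\}$ this is a weighted sum of independent centered random variables, so the scalar Rosenthal inequality gives
\begin{equation*}
\E_\eta |Z|^q \le C^q\Big(q^{q/2}\sigma_\eta^q\big(\sum_j |b_j|^2\big)^{q/2} + q^q\mu_\eta^{(q)}\sum_j |b_j|^q\Big).
\end{equation*}
Taking the unconditional expectation reduces the lemma to estimating the two quantities $\E(\sum_j|b_j|^2)^{q/2}$ and $\E\sum_j|b_j|^q$.

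For the pure-moment piece I would apply scalar Rosenthal to each $b_j$, obtaining $\E|b_j|^q \le C^q(q^{q/2}\sigma_\xi^q \mathcal L_j^q + q^q\mu_\xi^{(q)}\sum_i|a_{ij}|^q)$. Multiplying by $q^q\mu_\eta^{(q)}$ and summing in $j$ yields the pure moment term $q^{2q}\mu_\xi^{(q)}\mu_\eta^{(q)}\sum_{i,j}|a_{ij}|^q = \mathcal A_3\sum_{i,j}|a_{ij}|^q$ directly, together with a contribution of the form $q^{3q/2}\sigma_\xi^q\mu_\eta^{(q)}\sum_j \mathcal L_j^q$, which after symmetrizing in $\xi,\eta$ and AM--GM is absorbed into $\mathcal A_2\sum_j \mathcal L_j^q$. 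For the Hilbert--Schmidt piece, Minkowski's inequality in the form $\E(\sum_j|b_j|^2)^{q/2} \le (\sum_j(\E|b_j|^q)^{2/q})^{q/2}$ (valid for $q\ge 2$) and another application of scalar Rosenthal produce a leading term of order $q^{q/2}\sigma_\xi^q\sigma_\eta^q\|A\|^q$, which via $\sigma_\xi^q\sigma_\eta^q \le \tfrac12(\sigma_\xi^{2q}+\sigma_\eta^{2q})$ fits inside $\mathcal A_1\|A\|^q$; the lower-order cross terms are routed into $\mathcal A_2$ and $\mathcal A_3$.

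The main obstacle is obtaining the precise form of $\mathcal A_2$, whose exponents $\tfrac{q-6}{2(q-4)}$ and $\tfrac{2(q-2)}{q-4}$ indicate that the mixed contribution is not produced by a single Rosenthal application but by interpolating via H\"older between the $\ell^2$-estimate on $(\sum_j|b_j|^2)^{1/2}$ and a $q/2$-th moment estimate on $\sum_j|b_j|^{q/2}$, before being combined with the single-index Rosenthal bound on $\E|b_j|^q$ using the half-moment $\mu_\xi^{(q/2)}$. Because $\mathcal A_1$ and the first factor of $\mathcal A_2$ are symmetric in $\sigma_\xi^{2q}$ and $\sigma_\eta^{2q}$, one must also run the whole argument in the opposite order (conditioning first on $\eta$) and keep the better of the two resulting bounds; only then is the stated symmetric form recovered. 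Tracking the $q^{3q/2}$ prefactor cleanly through these interpolation steps, rather than accidentally producing $q^{2q}$ or higher, is the delicate bookkeeping that will occupy most of the proof.
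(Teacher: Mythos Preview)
Your outer skeleton matches the paper's: one application of scalar Rosenthal in the $\xi$-variables produces a variance piece $A_1=\E\big(\sum_i|\sum_j a_{ij}\eta_j|^2\big)^{q/2}$ and a moment piece $A_2$, and the latter is indeed handled by a second scalar Rosenthal on each inner sum, yielding directly the $\mathcal A_3$ term together with a $q^{3q/2}\sigma_\eta^q\mu_\xi^{(q)}\sum_j\mathcal L_j^q$ contribution. The genuine divergence is in $A_1$. You bound it by Minkowski, pulling the expectation inside the $\ell^2$-sum. The paper instead expands the square explicitly: writing $c_i=\sum_j a_{ij}\eta_j$, one has $\sum_i c_i^2=\sum_j\eta_j^2\mathcal L_j^2+\sum_{j\ne k}\eta_j\eta_k(\sum_l a_{lj}a_{lk})$, a diagonal-plus-off-diagonal quadratic form in $\eta$. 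The diagonal part is centred and hit with scalar Rosenthal again; the off-diagonal part is handled by a Rosenthal-type inequality for \emph{quadratic} forms (cited as \cite{Tikh:2018}), and it is this step that brings in the half-moment $\mu_\eta^{(q/2)}$.

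The specific exponents in $\mathcal A_2$ then arise not from interpolating between two moment bounds on the $b_j$'s, as you speculate, but from purely algebraic interpolation inequalities such as $(\sum_j\mathcal L_j^4)^{q/4}\le(\sum_j\mathcal L_j^q)^{q/(2(q-2))}\|A\|^{q(q-4)/(2(q-2))}$ applied to the resulting intermediate terms $B_2,\dots,B_5$, followed by Young's inequality to split each product into an $\|A\|^q$-part and a $\sum_j\mathcal L_j^q$-part. The $(\sigma_\xi^{2q}+\sigma_\eta^{2q})$ symmetry in $\mathcal A_1$ and $\mathcal A_2$ is a byproduct of this Young step, not of running the argument in both orders. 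Your Minkowski route is simpler and would give a valid inequality of the same shape, but the middle coefficient would naturally carry $\mu_\xi^{(q)}$ rather than $\mu_\xi^{(q/2)}$, so it does not reproduce the stated $\mathcal A_2$ without the quadratic-form expansion.
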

\begin{proof}
Let $A=\sum_{i,j=1}^na_{ij}\xi_i\eta_j=\sum_{i=1}^n\xi_i(\sum_{j=1}^na_{ij}\eta_j).$
Applying Rosenthal's inequality, we get
\begin{align*}
A\le C^q(q^{\frac q2}\sigma_{\xi}^q\E\Big(\sum_{i=1}^n\big(\sum_{j=1}^n\eta_ja_{ij})^2\big)^{\frac q2}+q^q\mu_{\xi}^{(q)}\sum_{i=1}^n\E\big|\sum_{j=1}^n a_{ij}\eta_j\big|^q\Big)\notag\\=:C^qq^{\frac q2}\sigma_{\xi}^qA_1+C^qq^q\mu_{\xi}^{(q)}A_2.
\end{align*}
Using the triangle inequality, we obtain
 \begin{align*}
 A_{1}\le 2^{\frac q2}\E\Big( \sum_{i=1}^n\sum_{j=1}^na_{ij}^2\eta_j^2\Big)^{\frac q2}+2^{\frac q2}\E\Big(\sum_{i\ne j}\eta_i\eta_j\big(\sum_{l=1}^na_{il}a_{lj}\big)\Big)^{\frac q2}\notag\\=:2^{\frac q2}( A_{11}+ A_{12}).
 \end{align*}
 Further,
 \begin{equation*}
  A_{11}\le 2^{\frac q2}\Big(\big(\sum_{i=1}^n\sum_{j=1}^n a_{ij}^2\big)^{\frac q2}\sigma_{\eta}^q+\E\Big(\sum_{j=1}^n
(\eta_j^2-\sigma_{\eta}^2)\big(\sum_{i=1}^na_{ij}^2 \big)\Big)^{\frac q2}\Big).
 \end{equation*}
Applying Rosenthal's inequality again, we conclude that
 \begin{align*}
 \E\Big(\sum_{j=1}^n
 (\eta_j^2-\sigma_{\eta}^2)\big(\sum_{i=1}^na_{ij}^2\big) \Big)^{\frac q2}&\le C^qq^{\frac q4}\Big(\sum_{j=1}^n\big(\sum_{i=1}^na_{ij}^2\big)^2\Big)^{\frac q4}(\mu_{\eta}^{(4)})^{\frac q4}\notag\\&+C^qq^{\frac q2}\mu_{\eta}^{(q)}\sum_{j=1}^n\big(\sum_{i=1}^n a_{ij}^2\big)^{\frac q2}.
 \end{align*}
 To estimate $A_{12}$, we use the inequality for quadratic forms from \cite{Tikh:2018}. We have
 \begin{align*}
 A_{12}\le &C^qq^{\frac q2}\sigma_{\eta}^q\Big(\sum_{i\ne j}\big(\sum_{l=1}^na_{il}a_{lj}\big)^2\Big)^{\frac q4}+C^qq^{\frac {3q}4}\mu_{\eta}^{(\frac q2)}\sigma_{\eta}^{\frac q2}\sum_{j=1}^n\Big(\sum_{i=1}^n (\sum_{l=1}^na_{il}a_{lj})^{2}\Big)^{\frac q4}\notag\\&+C^q\Big(q^{q}(\mu_{\eta}^{(\frac q2)})^2\Big(\sum_{i\ne j}\big(\sum_{l=1}^n a_{il}a_{lj}\big)^{\frac q2}\Big).
 \end{align*}
Summing up the above inequalities, we find that
 \begin{align*}
 A_1\le& C^q\sigma_{\eta}^q\Big(\sum_{i=1}^n\sum_{j=1}^na_{ij}^2\Big)^{\frac q2}+C^q(\mu_{\eta}^{(4)})^{\frac q4}q^{\frac q4}\Big(\sum_{j=1}^n\big(\sum_{i=1}^na_{ij}^2\big)^2\Big)^{\frac q4}\notag\\&+C^q\Big(q^{\frac q2}\sigma_{\eta}^q\Big(\sum_{i\ne j}\big(\sum_{l=1}^n a_{il}a_{lj}\big)^{2}\Big)^{\frac q4}+C^qq^{\frac{3q}4}\mu_{\eta}^{(\frac q2)}\sigma_{\eta}^{\frac q2}\sum_{i=1}^n\Big(\sum_{j\ne i}\big(\sum_{l=1}^na_{il}a_{lj}\big)^2\Big)^{\frac q4}\notag\\&+C^qq^{q}(\mu_{\eta}^{(\frac q2)})^2\sum_{i\ne j}\big(\sum_{l=1}^na_{il}a_{lj}\big)^{\frac q2}\Big)+C^qq^{\frac q2}\mu_{\eta}^{(q)}\sum_{j=1}^n\big(\sum_{i=1}^n a_{ij}^2\big)^{\frac q2}.
 \end{align*}
For $A_2$, by Rosenthal's inequality, we have
 \begin{align}
 A_2\le C^q\sigma_{\eta}^qq^{\frac q2}\sum_{i=1}^n\mathcal  L_i^q+C^qq^q\mu_{\eta}^{(q)}\sum_{i=1}^n|a_{ij}|^q.
 \end{align}
 Further note that
 \begin{align*}
 \Big(\sum_{j=1}^n\big(\sum_{i=1}^na_{ij}^2\big)^2\Big)^{\frac q4}&\le \Big(\sum_{i=1}^n\mathcal L_i^{q}\Big)^{\frac{q}{2(q-2)}}(\|A\|^q)^{\frac {q-4}{2(q-2)}},\notag\\
\Big(\sum_{i\ne j}\big(\sum_{l=1}^n a_{il}a_{lj}\big)^{2}\Big)^{\frac q4}&\le\|A\|^q,\\
\sum_{i=1}^n\Big(\sum_{j\ne i}\big(\sum_{l=1}^na_{il}a_{lj}\big)^2\Big)^{\frac q4}&\le \Big(\sum_{j=1}^n\mathcal L_j^q\Big)^{\frac{(q-4)}{2(q-2)}}\big(\|A\|^{q}\big)^{\frac{q}{2(q-2)}},\\
\Big(\sum_{i}\mathcal L_j^{\frac q2}\Big)^2&\le\Big(\sum_{j=1}^n\mathcal L_j^q\Big)^{\frac{(q-4)}{(q-2)}}\big(\|A\|^{q}\big)^{\frac{2}{(q-2)}}.
 \end{align*}
 For $A$ we get the estimate
 \begin{equation*}
 A\le C^q(B_1+\ldots+B_8),
 \end{equation*}
 where
 \begin{align*}
 B_1=&q^{\frac q2}\sigma_{\xi}^q\sigma_{\eta}^q\|A\|^q,\\
B_2=&\sigma_{\xi}^q(\mu_{\eta}^{(4)})^{\frac q4}q^{\frac {3q}4}\Big(\sum_{j=1}^n\mathcal L_j^q\Big)^{\frac q{2(q-2)}}\big(\|A\|^q)^{\frac{q-4}{2(q-2)}},\\
 B_3=&q^q\sigma_{\xi}^q\sigma_{\eta}^q
 \Big(\sum_{j=1}^n\mathcal L_j^q\Big)^{\frac{q-4}{q-2}}\Big(\|A\|^q\Big)^{\frac 2{q-2}},\\
	B_4=&q^{\frac {5q}4}\sigma_{\xi}^q\mu_{\eta}^{(\frac q2)}\sigma_{\eta}^{\frac q2}
	\Big(\sum_{j=1}^n\mathcal L_j^q\Big)^{\frac{q-4}{2(q-2)}}
	\Big(\|A\|^q\Big)^{\frac q {2(q-2)}},\\
 		B_5=&q^{\frac{3q}2}\sigma_{\xi}^q\big(\mu_{\eta}^{(\frac q2)}\big)^2\Big(\sum_{j=1}^n\mathcal L_j^q\Big)^{\frac{q-4}{(q-2)}}\Big(\|A\|^q\Big)^{\frac 2 {(q-2)}},\\
 			B_6=&q^{q}\sigma_{\xi}^q\mu_{\eta}^{(q)}\sum_{j=1}^n\mathcal L_j^q,\\
B_7=&q^{\frac{3q}2}\sigma_{\eta}^q\mu_{\xi}^{(q)}\sum_{j=1}^n\mathcal L_j^q,\\
B_8=&q^{2q}\mu_{\xi}^{(q)}\mu_{\eta}^{(q)}\sum_{i,j=1}^n|a_{ij}|^q.
\end{align*}
 Applying Young's inequality, we obtain the bounds
 \begin{align*}
 B_2\le Cq^{\frac{3q}4}\Big(\sigma_{\xi}^4(\mu_{\eta}^{(4)})^{\frac{q-2}2}\sum_{j=1}^n\mathcal L_j^q+\sigma_{\xi}^{2q}\|A\|^q),\\
 B_3\le C^qq^q\sigma_{\xi}^q\sigma_{\eta}^q\sum_{j=1}^n\mathcal L_j^q+C^qq^q\sigma_{\xi}^q\sigma_{\eta}^q\|A\|^q\Big),\\
 B_4\le C^qq^{\frac{5q}4}\Big((\sigma_{\xi}^{2q}+\sigma_{\eta}^{2q})\|A\|^q
 +(\mu_{\eta}^{(\frac q2)})^{\frac{2(q-2)}{q-4}}(\sigma_{\xi}^{q\frac{q-6}{q-4}}+\sigma_{\eta}^{q\frac{q-6}{q-4}})\sum_{j=1}^n\mathcal L_j^q\Big),\\
 B_5\le C^qq^{\frac{3q}2}\Big(\sigma_{\xi}^{2q}\|A\|^q+\sigma_{\xi}^{\frac{q(q-6)}{q-4}}(\mu_{\eta}^{(\frac q2)})^{\frac{2(q-2)}{q-4}}\sum_{j=1}^n\mathcal L_j^q\Big).
 \end{align*}
 The last inequalities give
 \begin{equation*}
 A\le C^q\big(\mathcal A_1\|A\|^q+\mathcal A_2(\sum_{j=1}^n\mathcal L_j^q)+\mathcal A_3(\sum_{i,j=1}^n|a_{ij}|^q)\big),
 \end{equation*}
 where
 \begin{align*}
 \mathcal A_1=&q^{\frac{3q}2}(\sigma_{\xi}^{2q}+\sigma_{\eta}^{2q}),\\ 
 \mathcal A_2=&q^{\frac{3q}2}(\sigma_{\xi}^{(2q)}+\sigma_{\eta}^{2q})^{\frac{q-6}{2(q-4}}(\mu_{\xi}^{(\frac q2)})^{\frac{2(q-2)}{q-4}},\\
 \mathcal A_3=&q^{2q}\mu_{\xi}^{(q)}\mu_{\eta}^{(q)}.
 \end{align*}
 Thus Lemma is proved.
\end{proof}

%

\end{document}